\crefname{assumption}{Assumption}{Assumptions}
\crefname{lemma}{Lemma}{Lemmas}
\crefname{theorem}{Theorem}{Theorems}
\crefname{discr}{Discretization}{Discretizations}
\apptocmd{\sloppy}{\hbadness 10000\relax}{}{}
\newcommand{\dual}[1]{\langle {#1} \rangle}
\newcommand{\dualb}[1]{\big\langle {#1} \big\rangle}
\newcommand{\dualB}[1]{\Big\langle {#1} \Big\rangle}
\newcommand{\norm}[1]{\lVert {#1} \rVert}
\newcommand{\nmb}[1]{\big\lVert {#1} \big\rVert}
\newcommand{\nmB}[1]{\Big\lVert {#1} \Big\rVert}
\newcommand{\snm}[1]{\lvert {#1} \rvert}
\newcommand{\snmb}[1]{\big\lvert {#1} \big\rvert}
\newcommand{\snmB}[1]{\Big\lvert {#1} \Big\rvert}
\newcommand{\ssnm}[1]
{
	\left\vert\kern-0.25ex
	\left\vert\kern-0.25ex
	\left\vert
	{#1}
	\right\vert\kern-0.25ex
	\right\vert\kern-0.25ex
	\right\vert
}
\def\spher@harm#1{%
	\vbox{\hbox{%
			\offinterlineskip
			\valign{&\hb@xt@2\p@{\hss$##$\hss}\vskip.2ex\cr#1\crcr}%
		}\vskip-.36ex}%
}
\def\gshone{\spher@harm{.}}
\def\gshtwo{\spher@harm{.&.}}
\def\gshthree{\spher@harm{.&.&.}}
\let\gsh\spher@harm
\newtheorem{lemma}{Lemma}[section]
\newtheorem{remark}{Remark}[section]
\newtheorem{theorem}{Theorem}[section]
\numberwithin{equation}{section}
\def\@captype{table}\makeatother
\begin{document}
\title{
  \Large\bf Stability and convergence of the Euler scheme for stochastic linear evolution
  equations in Banach spaces\thanks{
    This work was partially supported by National Natural
    Science Foundation of China under grant 12301525 and
    Natural Science Foundation of Sichuan Province under grant 2023NSFSC1324.
  }
}

\author{Binjie Li\thanks{libinjie@scu.edu.cn} } 
\author{Xiaoping Xie\thanks{Corresponding author: xpxie@scu.edu.cn}}
\affil{School of Mathematics, Sichuan University, Chengdu 610064, China}

\date{}
\maketitle
\begin{abstract}
  For the Euler scheme of the stochastic linear evolution
  equations, the discrete stochastic maximal $ L^p $-regularity
  estimate is established, and a sharp error estimate in the norm
  $ \norm{\cdot}_{L^p(\Omega\times(0,T);L^q(\mathcal O))} $,
  $ p,q \in [2,\infty) $, is derived via a duality argument.
\end{abstract}

\medskip\noindent{\bf Keywords:} stochastic evolution equations, Euler scheme,
discrete stochastic maximal $ L^p $-regularity, convergence

\section{Introduction}

The numerical methods of stochastic partial differential equations (SPDEs) have been
extensively studied in the past decades, and by now it is still an active
research area; see, e.g., \cite{Bessaih2019,Breit2021,CarelliProhl2012,Prohl2013,Prohl2012,Diening2023,Gyongy1999,Gyongy2003,Gyongy2009,Gyongy1997,Kruse2014book,Yan2005,Zhang2017book} and the reference therein.
However, the majority of numerical analysis efforts in this domain have primarily focused on the Hilbert space
framework, while investigations within the context of Banach spaces remain relatively limited.

We summarize some related works in this field that have come to our attention.
Regarding abstract stochastic Cauchy problems, Cox and van Neerven \cite{Cox2010,Cox2013} established
pathwise Hölder convergence for both the splitting scheme and the implicit-linear Euler scheme.
Blömker and Jentzen \cite{Blomker2013} conducted a detailed analysis of Galerkin approximations
for the one-dimensional stochastic Burgers equation, focusing on the spatial $L^\infty$-space setting.
Br\'ehier et al.~\cite{Breit2019} analyzed semidiscrete splitting approximations for the stochastic Allen-Cahn equation with additive noise under general spatial $ L^q $-norms.
Recently, van Neerven and Veraar \cite{Neerven2022} developed an elegant framework for establishing pathwise uniform convergence for time discretisation schemes for a broad class of SPDEs.
Additionally, Klioba and Veraar~\cite{Klioba2024} analyzed temporal approximations of stochastic evolution equations with irregular nonlinearities within the $2$-smooth Banach spaces setting.


Despite these significant advancements, the numerical analysis of SPDEs in the broader context
of general Banach spaces remains an underdeveloped area.
This motivates us to analyze the stability and convergence of the Euler scheme for the stochastic linear
evolution equations in Banach spaces, which is one of the most popular
temporal discretization scheme in this realm.




Firstly, we establish a discrete stochastic maximal $ L^p $-regularity estimate.
Maximal $ L^p $-regularity is of fundamental importance for the deterministic
evolution equations; see, e.g., \cite{Denk2013book,Kunstmann2004,Pruss2016,Weis2001}.
In the past twenty years, the discrete maximal $ L^p $-regularity of deterministic
evolution equations has also attracted great attention; see, e.g.,
\cite{Blunck2001,Kemmochi2016, Lubich2016,Kemmochi2018,Vexler_Lp_2017,
LiB2017Math,LiB2017SIAM}.
Utilizing the techniques of $ H^\infty $-calculus, $ \mathcal R $-boundedness,
and square function estimates, van Neerven et al.~\cite{Neerven2012} have established the
following seminal stochastic maximal $ L^p $-regularity estimate:
\begin{align*}
  \Biggl[
    \mathbb E \int_{\mathbb R_{+}} 
    \biggl\lVert A^{1/2} \int_0^t S(t-s) f(s) \, \mathrm{d}W(s) \biggr\rVert_{L^q(\mathcal O)}^p
    \, \mathrm{d}t
  \Biggr]^{1/p}
  \leqslant c \Bigl[ \mathbb E \norm{f}_{L^p(\mathbb R_{+};L^q(\mathcal O;H))}^p \Bigr]^{1/p}.
\end{align*}
In this inequality, $p \in (2,\infty)$, $q \in [2,\infty)$, $\mathcal{O}$ is a bounded domain in
$ \mathbb R^d $ ($d\geqslant 2$), $ H $ is a separable Hilbert space, $ W $ represents an $ H $-cylindrical Brownian motion,
$ S(\cdot) $ is the analytic semigroup generated by a sectorial operator $ A $ on $ L^q(\mathcal O) $,
and $ f $ is an $ L^q(\mathcal O;H) $-valued stochastic process adapted to the underlying filtration.
Building on the methodology laid out in \cite{Neerven2012}, we establish the following discrete stochastic maximal $ L^p $-regularity estimate:
\begin{align*}
  \Biggl[
    \mathbb E\sum_{j=1}^\infty
    \biggl\lVert A^{1/2}\sum_{k=0}^{j-1}\int_{k\tau}^{k\tau+\tau} (I + \tau A)^{k-j} f_k \, \mathrm{d}W(t) \biggr\rVert_{L^q(\mathcal O)}^p
  \Biggr]^{1/p} \leqslant
  c \biggl[
    \mathbb E\sum_{j=0}^\infty \norm{f_j}_{L^q(\mathcal O;H)}^p
  \biggr]^{1/p},
\end{align*}
where $ \tau $ represents the time step. This result is presented rigorously in Theorem~\ref{thm:space-regu}.
For comparison, we provide the deterministic discrete maximal $L^p$-regularity estimate as follows (see \cite{Kemmochi2016}):
\begin{align*}
\Biggl[
\sum_{j=1}^\infty
\tau \biggl\lVert A\sum_{k=0}^{j-1}\int_{k\tau}^{k\tau+\tau} (I + \tau A)^{k-j} g(t) \, \mathrm{d}t \biggr\rVert_{L^q(\mathcal{O})}^p
\Biggl]^{1/p} \leqslant
c \lVert g \rVert_{L^p(\mathbb{R}+;L^q(\mathcal{O}))}, \quad \forall g \in L^p(\mathbb{R}+;L^q(\mathcal{O})).
\end{align*}
In contrast to the deterministic case, the discrete stochastic maximal $ L^p $-regularity estimate exhibits
inferior spatial regularity, a discrepancy that can be attributed to the presence of Brownian motion.
Notably, when $ p = q = 2 $ and $ A $ corresponds to the negative Laplacian on $ L^q(\mathcal{O}) $ with
homogeneous Dirichlet boundary conditions, the aforementioned discrete stochastic maximal $ L^p $-regularity
estimate is well-established and can be derived using a simple energy argument.
Furthermore, analogous estimates have been obtained in the Hilbert space setting by Kazashi~\cite{Kazashi2018}.
Although our numerical analysis assumes that $ A $ is a sectorial operator on $ L^q(\mathcal{O}) $, the findings can be extended to the case where $ A $ is the negative Stokes operator.

Secondly, we establish a sharp error estimate in the norm
$ \norm{\cdot}_{L^p(\Omega\times(0,T);L^q(\mathcal O))} $, with $ p,q \in [2,\infty) $.
Previous research, including the works of \cite{Cox2010,Cox2013,Neerven2022},
has yielded various error estimates involving general spatial $ L^q $-norms.
However, the convergence in the specific norm $ L^p(\Omega\times(0,T);L^q(\mathcal O)) $ has not been thoroughly investigated.
This type of error estimate is particularly significant for the numerical analysis of stochastic optimal control problems that involve stochastic evolution equations.
In this study, by employing a duality argument and assuming that the process $ f $ is piecewise constant in time, we derive the following sharp error estimate:
\[
\left[
\mathbb{E} \sum_{j=0}^{J-1} \int_{j\tau}^{j\tau+\tau} \norm{y(t) - Y_j}_{L^q(\mathcal O)}^p \, \mathrm{d}t
\right]^{1/p} \leqslant c \tau^{1/2}
\left[
\mathbb{E} \int_0^T \norm{f(t)}_{L^q(\mathcal O;H)}^p \, \mathrm{d}t
\right]^{1/p},
\]
where $ y $ denotes the mild solution of a stochastic linear evolution equation,
and $ (Y_j)_{j=0}^J $ represents its temporal approximation via the Euler scheme.
This result is formally stated in \cref{thm:conv}.
It is widely recognized that for the Euler scheme, achieving a convergence rate of \( 1/2 \) under general spatial
\( L^q \)-norms is typically unattainable when dealing with rough data.
However, it is noteworthy that under the additional assumption of piecewise constant temporal behavior for
the process \( f \), the convergence rate of \( 1/2 \) can still be achieved.




The remainder of this paper is structured as follows. Section \ref{sec:pre} introduces
necessary notation alongside the concepts of $\gamma$-radonifying operators, $\mathcal{R}$-boundedness,
$H^\infty$-calculus, and stochastic integral. In Section \ref{sec:stability},
we establish the discrete maximal $L^p$-regularity. Finally, Section \ref{sec:convergence} provides
a sharp error estimate.

\section{Preliminaries}
\label{sec:pre}
\medskip\noindent\textbf{Conventions}.
Throughout this paper, we will use the following conventions:
For any Banach spaces $ E_1 $ and $ E_2 $, $ \mathcal L(E_1,E_2) $ denotes the space of all bounded linear operators from $ E_1 $ to $ E_2 $, and $ \mathcal L(E_1,E_1) $ is abbreviated to $ \mathcal L(E_1) $;
The symbol $ I $ denotes the identity operator;
For each \( p \in [1,\infty] \), its conjugate exponent is denoted by \( p' \);
For any measure space $ (X,\mathcal A,\mu) $, any Banach space $ E $,
and any $ p \in [1,\infty] $, we use $ L^p(X;E) $ to denote a standard Bochner space (see \cite[Chapter 1]{HytonenWeis2016});
Let $ \mathcal O \subset \mathbb R^d \, (d\geqslant 2) $ be a bounded domain with a Lipschitz boundary;
The imaginary unit is denoted by $i$;
For any \( z \in \mathbb C \setminus \{0\} \), its argument
\( \operatorname{Arg} z \) is restricted to the interval \( (-\pi, \pi] \);
The symbol $ c $ denotes a generic positive constant, which is independent of the time step $ \tau $ but may differ in different places.
In addition, for any $ \theta \in (0,\pi) $, we define the sector
\[ 
  \Sigma_{\theta} := \{
    z \in \mathbb C \setminus \{0\} \mid
    -\theta < \operatorname{Arg} z < \theta
  \}.
\]

\medskip\noindent\textbf{$\gamma$-Radonifying operators}.
For any Banach space $ E $ and Hilbert space $ U $ with inner product $
(\cdot,\cdot)_U $, define
\[
   \mathcal S(U,E) := \text{span}\big\{
   u \otimes e \mid \, u \in U, \, e \in E
   \},
\]
where $ u \otimes e \in \mathcal L(U,E) $ is defined by
\[
  (u \otimes e)(v) := (v,u)_U e, \quad \forall v \in U.
\]
Let $ \gamma(U,E) $ denote the completion of $ \mathcal S(U,E) $
with respect to the norm
\[
  \nmB{\sum_{n=1}^N \phi_n \otimes e_n}_{\gamma(U,E)} := \Big(
  \mathbb E \nmB{ \sum_{n=1}^N \gamma_n e_n }_E^2
  \Big)^{1/2}
\]
for all $ N \in \mathbb N_{>0} $, all orthonormal systems
$ (\phi_n)_{n=1}^N $ of $ U $, all sequences $ (e_n)_{n=1}^N $ in $ E $,
and all sequences $ (\gamma_n)_{n=1}^N $ of independent standard Gaussian
random variables. Here, $ \mathbb E $ denotes the expectation operator
associated with the probability space on which $ \gamma_1, \ldots, \gamma_N $
are defined.
It is noteworthy that when $ E $ is a Hilbert space, $ \gamma(U,E) $ is identical to the space of
all Hilbert-Schmidt operators from $ U $ to $ E $, equipped with the same norm.
Furthermore, of particular significance is the isometric isomorphism between $ L^q(\mathcal O;H) $ and
$ \gamma(H,L^q(\mathcal O)) $ for any $ q \in [1,\infty) $.
For a comprehensive study of $\gamma$-radonifying operators, the reader is directed
to \cite[Chapter~9]{HytonenWeis2017}.

\medskip\noindent\textbf{$ \mathcal R $-boundedness}. For any two Banach spaces
$ E_1 $ and $ E_2 $, an operator family $ \mathcal A \subset \mathcal L(E_1,
E_2) $ is said to be $ \mathcal R $-bounded if there exists a constant $ C > 0 $
such that
\[
  \int_0^1 \nmB{ \sum_{n=1}^N r_n(t) B_n x_n  }_{E_2}^2 \, \mathrm{d}t
  \leqslant C \int_0^1 \nmB{ \sum_{n=1}^N r_n(t) x_n }_{E_1}^2 \, \mathrm{d}t
\]
for all $ N \geqslant 1 $, all sequences $ (B_n)_{n=1}^N $ in $\mathcal A $,
all sequences $ (x_n)_{n=1}^N $ in $ E_1 $, and all sequences $ (r_n)_{n=1}^N $
of independent symmetric $ \{-1,1\} $-valued random variables on $ [0,1] $.
We denote by $ \mathcal R(\mathcal A) $ the infimum of these $ C $'s.
For a comprehensive treatment of $\mathcal R $-boundedness, the reader is referred to
\cite[Chapter~10]{HytonenWeis2017} and \cite[Chapter~4]{Pruss2016}.

\medskip\noindent\textbf{$ H^\infty $-calculus}.
A sectorial operator \( A \) with an angle of analyticity \( \omega_A \) on a Banach space \( E \) is said to possess a bounded \( H^\infty \)-calculus if there exists \( \sigma \in (\omega_A, \pi] \) such that
\[
  \nmB{
    \int_{\partial\Sigma_{\sigma}} \varphi(z) (z-A)^{-1} \, \mathrm{d}z
  }_{\mathcal L(E)} \leqslant
  C \sup_{z \in \Sigma_{\sigma}} \snm{\varphi(z)}
\]
holds for all \( \varphi \in \mathcal H_0^\infty(\Sigma_\sigma) \), where \( C \) is a positive constant that is independent of \( \varphi \). The space \( \mathcal H_0^\infty(\Sigma_\sigma) \) is defined as
\begin{align*}
  \mathcal H_0^\infty(\Sigma_\sigma)
  &:= \left\{
    \varphi: \Sigma_{\sigma} \to \mathbb C \mid \,
    \text{the function $\varphi$ is analytic and there exists $ \varepsilon > 0 $ such that} \right. \\
  & \qquad\qquad\qquad\qquad\qquad
  \left. \sup_{z \in \Sigma_\sigma}
  \left( \frac{1+\snm{z}^2}{\snm{z}} \right)^\varepsilon
  \snm{\varphi(z)} < \infty
  \right\}.
\end{align*}
The infimum of all such \( \sigma \) values is referred to as the angle of the \( H^\infty \)-calculus of \( A \).
It is worth mentioning that a wide array of partial differential operators admits a bounded \( H^\infty \)-calculus, including the negative Laplacian and the negative Stokes operator; see Section~9 of \cite{Kalton2006}. For an exhaustive treatment of the \( H^\infty \)-calculus, the reader is directed to Chapter 5 of \cite{Haase2006} and Chapter 10 of \cite{HytonenWeis2017}.

\medskip\noindent\textbf{Stochastic integral}.
Assume that $(\Omega, \mathcal{F}, \mathbb{P})$ is a given complete probability
space equipped with a right-continuous filtration $\mathbb{F} := (\mathcal{F}_t)_{t \geqslant 0}$.
On this space, we are given a sequence of independent $\mathbb{F}$-adapted Brownian motions
$(\beta_n)_{n \in \mathbb{N}}$ such that for any $ 0 \leqslant s < t < \infty $ and
for any $ n \in \mathbb N $, the increment $ \beta_n(t) - \beta_n(s) $ is independent of $ \mathcal F_s$.
In the sequel, we will
use $ \mathbb E $ to denote the expectation operator associated with the
probability space $ (\Omega, \mathcal F,\mathbb P) $.
Let $H$ be a separable Hilbert space with inner product
$(\cdot, \cdot)_H$ and an orthonormal basis $(h_n)_{n \in \mathbb{N}}$.
The $ \mathbb F $-adapted $H$-cylindrical Brownian motion
$W$ is defined such that for each $t \geqslant 0$, $W(t)$ is an element of $\mathcal{L}(H, L^2(\Omega))$,
given explicitly by
\[
  W(t)h = \sum_{n \in \mathbb{N}} (h, h_n)_H \beta_n(t), \quad \forall h \in H.
\]
The reader is referred to \cite{Neerven2007} for the theory of stochastic integrals with respect to $W$
in UMD Banach spaces. 
For any $ p,q \in (1,\infty) $, let
$ L_\mathbb F^p(\Omega;L^q(\mathcal O;L^2(\mathbb R_{+};H))) $
denote the space of all $ \mathbb F $-adapted $ L^q(\mathcal O;H) $-valued processes in
$ L^p(\Omega;L^q(\mathcal O;L^2(\mathbb R_{+};H))) $.
The stochastic integral has the following essential isomorphism feature;
see, e.g., \cite[Theorem 2.3]{Neerven2012b}.
\begin{lemma}
  \label{lem:integral}
  For any $ p,q \in (1,\infty) $, there exist two positive constants
  $ c_0 $ and $ c_1 $ such that
  \begin{equation} 
    c_0 \mathbb E\norm{f}_{L^q(\mathcal O;L^2(\mathbb R_{+};H))}^p
    \leqslant \mathbb E\nmB{
      \int_{\mathbb R_{+}} f(t) \,\mathrm{d}W(t)
    }_{L^q(\mathcal O)}^p \leqslant
    c_1 \mathbb E\norm{f}_{L^q(\mathcal O;L^2(\mathbb R_{+};H))}^p
  \end{equation}
  for all $ f \in L_\mathbb F^p(\Omega;L^q(\mathcal O;L^2(\mathbb R_{+};H))) $.
\end{lemma}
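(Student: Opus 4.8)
The plan is to prove both inequalities simultaneously by factoring through the $\gamma$-radonifying norm of $f$, which is the natural quantity interpolating between the stochastic integral and the right-hand side. Set $\mathcal H := L^2(\mathbb R_{+};H)$. Since every $f \in \mathcal S_{pq}$ is a finite sum of elementary tensors, for each $\omega$ the map $f(\omega)$ is a finite-rank element of $\gamma(\mathcal H,L^q(\mathcal O))$ depending measurably on $\omega$, so the goal becomes establishing the two comparisons
\[
  \mathbb E\nmB{\int_{\mathbb R_{+}} f(t)\,\mathrm dW(t)}_{L^q(\mathcal O)}^p
  \eqsim \mathbb E\nm{f}_{\gamma(\mathcal H,L^q(\mathcal O))}^p
  \eqsim \mathbb E\nm{f}_{L^q(\mathcal O;\mathcal H)}^p ,
\]
with constants depending only on $p$ and $q$, together with the observation that the last quantity is exactly $\mathbb E\nm{f}_{L^q(\mathcal O;L^2(\mathbb R_{+};H))}^p$, the right-hand side of the lemma.

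For the first comparison I would use a decoupling argument, which is where the Banach-space geometry of $L^q(\mathcal O)$ enters: because $L^q(\mathcal O)$ is a UMD space for $q\in(1,\infty)$, the stochastic integral may be compared two-sidedly, with constants depending only on $q$, to the same integral taken against an independent copy $\widetilde W$ of $W$, i.e.
\[
  \mathbb E\nmB{\int_{\mathbb R_{+}} f\,\mathrm dW}_{L^q(\mathcal O)}^p
  \eqsim \mathbb E\,\widetilde{\mathbb E}\,\nmB{\int_{\mathbb R_{+}} f\,\mathrm d\widetilde W}_{L^q(\mathcal O)}^p .
\]
Fixing $\omega$ and writing $f$ out explicitly, the disjointness of the intervals $(t_{l-1},t_l]$ and the orthonormality of $(h_n)$ show that, conditionally on $\mathcal F$, the decoupled integral $\int f\,\mathrm d\widetilde W$ is precisely a Gaussian sum $\sum_{l,n}\gamma_{ln}\,f(\omega)\phi_{ln}$ over the orthonormal system $\phi_{ln} := (t_l-t_{l-1})^{-1/2} I_{(t_{l-1},t_l]} h_n$ of $\mathcal H$. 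By the Kahane--Khintchine inequality (all moments of a Gaussian sum in a Banach space are comparable), its conditional $p$-th moment is comparable to $\nm{f(\omega)}_{\gamma(\mathcal H,L^q(\mathcal O))}^p$; taking expectation over $\Omega$ yields the first comparison.

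The second comparison is the standard identification $\gamma(\mathcal H,L^q(\mathcal O)) \cong L^q(\mathcal O;\mathcal H)$. Choosing an orthonormal basis $(\phi_k)$ of $\mathcal H$ and writing an element as $\sum_k \phi_k \otimes e_k$ with $e_k \in L^q(\mathcal O)$, I would apply Kahane--Khintchine once more and then Fubini together with the scalar Khintchine inequality to obtain
\[
  \nmB{\sum_k \phi_k \otimes e_k}_{\gamma(\mathcal H,L^q(\mathcal O))}
  \eqsim \Big(\mathbb E\nmB{\sum_k \gamma_k e_k}_{L^q(\mathcal O)}^q\Big)^{1/q}
  \eqsim \nmB{\Big(\sum_k \snm{e_k}^2\Big)^{1/2}}_{L^q(\mathcal O)}
  = \nmB{\sum_k\phi_k\otimes e_k}_{L^q(\mathcal O;\mathcal H)} .
\]
Applying this to $f(\omega)$, the pointwise $\mathcal H$-norm is the square function $\big(\int_{\mathbb R_{+}}\nm{f(\omega)(t)}_H^2\,\mathrm dt\big)^{1/2}$, so the right-hand side equals $\nm{f(\omega)}_{L^q(\mathcal O;L^2(\mathbb R_{+};H))}$; raising to the $p$-th power and taking $\mathbb E$ closes the chain.

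The main obstacle is clearly the first comparison: the two-sided decoupling inequality is a genuine consequence of the UMD property of $L^q(\mathcal O)$ and is the deep analytic input, whereas the two Kahane--Khintchine reductions and the Fubini step are routine. For simple integrands the decoupling can be set up by a martingale-transform argument over the partition points $t_0 \leqslant \cdots \leqslant t_L$, and the whole first comparison is exactly the content of the cited \cite[Theorem 2.3]{Neerven2012b}, which could alternatively be invoked directly.
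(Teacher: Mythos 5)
Your proposal is correct and matches the paper's treatment: the paper offers no proof of this lemma beyond the citation of \cite[Theorem~2.3]{Neerven2012b}, and your sketch (decoupling via the UMD property of $L^q(\mathcal O)$, Kahane--Khintchine to reduce the decoupled integral to the $\gamma$-norm, and the square-function identification $\gamma(L^2(\mathbb R_{+};H),L^q(\mathcal O))\cong L^q(\mathcal O;L^2(\mathbb R_{+};H))$) is precisely the standard argument behind that cited theorem, as you yourself note at the end.
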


\medskip\noindent\textbf{Discrete spaces}.
For any Banach space $ E $ and $ p \in [1,\infty) $, define
\[ 
  \ell^p(E) := \Big\{
    (v_j)_{j \in \mathbb N}  \Bigl| \,\,
    \sum_{j \in \mathbb N} \norm{v_j}_E^p < \infty
  \Big\},
\]
and endow this space with the norm
\[ 
  \norm{(v_j)_{j\in\mathbb N}}_{\ell^p(E)} :=
  \Big( \sum_{j \in \mathbb N} \norm{v_j}_E^p \Big)^{1/p}
  \quad\text{for all } (v_j)_{j \in \mathbb N} \in \ell^p(E).
\]
For any $ v \in \ell^p(E) $, we use $ v_j $, $ j \in \mathbb N $, to denote
its $ j $-th element.

\section{Stability estimates}
\label{sec:stability}
Let \(0 < \tau < 1\) be a fixed time step. For any \(p, q, r \in [1, \infty)\), let \(\ell_{\mathbb{F}}^p(L^r(\Omega; L^q(\mathcal{O}; H)))\) denote the space
\[
\left\{ v \in \ell^p(L^r(\Omega; L^q(\mathcal{O}; H))) \mid v_j \text{ is } \mathcal{F}_{j\tau}\text{-measurable for all } j \in \mathbb{N} \right\}.
\]
It is well-known that \(\ell_{\mathbb{F}}^p(L^r(\Omega; L^q(\mathcal{O}; H)))\) forms a Banach space when endowed with the norm \(\|\cdot\|_{\ell^p(L^r(\Omega; L^q(\mathcal{O}; H)))}\).
For each \(j \in \mathbb{N}\) and for any \(v \in L^p(\Omega; L^q(\mathcal{O}; H))\) with \(p, q \in [2, \infty)\) that is \(\mathcal{F}_{j\tau}\)-measurable, we introduce the shorthand notation \(v \delta W_j\) to represent the stochastic integral
\[
\int_{j\tau}^{j\tau+\tau} v \, \mathrm{d}W(t).
\]
This notation is adopted for the sake of brevity and clarity.

This section studies the stability of the following Euler scheme: seek $ Y := (Y_j)_{j\in\mathbb N} $ such that
\begin{subequations} 
  \label{eq:Y-def}
  \begin{numcases}{}
    Y_{j+1} - Y_j + \tau A Y_{j+1} = f_j \delta W_j, \quad j \in \mathbb N, \\
    Y_0 = 0,
  \end{numcases}
\end{subequations}
where the sequence $ f := (f_j)_{j\in \mathbb N} $ is given.

The main result of this section are the following two theorems.
\begin{theorem} 
  \label{thm:time-regu}
  Let $ p,q,r \in (1,\infty) $. Assume that $ A $ is a densely defined
  sectorial operator on $ L^q(\mathcal O) $ and
  \(\{z(z-A)^{-1} \mid z \in \mathbb C\setminus\overline{\Sigma_{\theta_A}}\} \)
  is $ \mathcal R $-bounded in $ \mathcal L(L^q(\mathcal O)) $,
  where $ \theta_A \in (0,\pi/2) $.
  Let $ Y $ be the solution to the discretization \cref{eq:Y-def} with
  \[
    f \in \ell_{\mathbb F}^p(L^r(\Omega; L^q(\mathcal O;H))).
  \]
  Then the following stability estimate holds:
  \begin{equation}
    \label{eq:time-regu}
    \left[
      \sum_{j=0}^\infty  \nmB{
        \frac{Y_{j+1} - Y_j}{\sqrt\tau}
      }_{L^r(\Omega;L^q(\mathcal O))}^p
    \right]^{1/p} \leqslant
    c \norm{f}_{\ell^p(L^r(\Omega;L^q(\mathcal O;H)))},
  \end{equation}
  where $ c $ is a constant independent of the time step $ \tau $.
\end{theorem}

\begin{theorem} 
  \label{thm:space-regu}
  Let $ p \in (2,\infty) $ and $ q \in [2,\infty) $. Assume that $ A $ is a
  densely defined sectorial operator on $ L^q(\mathcal O) $ satisfying the
  following conditions:
  \begin{itemize}
    \item $ A $ has a dense range in $ L^q(\mathcal O) $;
    \item  there exists $ \theta_A \in (0,\pi/2) $ such that
      \begin{equation}
        \label{eq:z-A-inv}
        \sup_{z \in \mathbb C \setminus \{0\}, \, \snm{\operatorname{Arg} z} \geqslant \theta_A} \,
        \snm{z} \norm{(z-A)^{-1}}_{\mathcal L(L^q(\mathcal O))}
        < \infty;
      \end{equation}
    \item $ A $ admits a bounded $ H^\infty $-calculus of angle less than $ \theta_A $.
  \end{itemize}
  Let $ Y $ be the solution to the discretization \cref{eq:Y-def} with
  \[
    f \in \ell_{\mathbb F}^p(L^p(\Omega; L^q(\mathcal O;H))).
  \]
  Then the following discrete stochastic maximal $ L^p $-regularity estimate holds:
  \begin{equation}
    \label{eq:space-regu}
    \norm{A^{1/2}Y}_{\ell^p(L^p(\Omega;L^q(\mathcal O)))} \leqslant
    c \norm{f}_{\ell^p(L^p(\Omega;L^q(\mathcal O;H)))},
  \end{equation}
  where $ c $ is a constant independent of $ \tau $.
\end{theorem}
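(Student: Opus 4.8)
The plan is to convert the stochastic estimate into a purely deterministic discrete square function estimate, and then to establish the latter through the bounded $ H^\infty $-calculus of $ A $; the second step is where essentially all the difficulty lies. By density it suffices to treat finitely supported $ f $. First I would solve \cref{eq:Y-def} explicitly: with $ R := (I+\tau A)^{-1} $ one has $ Y_j = \sum_{k=0}^{j-1} R^{j-k} f_k \, \delta W_k $, so that $ A^{1/2}Y_j = \int_0^\infty \psi_j(s)\,\mathrm dW(s) $, where $ \psi_j \in \mathcal S_{pq} $ is the step process equal to $ A^{1/2}R^{j-k}f_k $ on $ (t_k,t_{k+1}] $ for $ 0\leqslant k<j $ and equal to $ 0 $ elsewhere. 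Applying \cref{lem:integral} to each $ \psi_j $, summing over $ j $, and using
\[
  \nm{\psi_j}_{L^q(\mathcal O;L^2(\mathbb R_{+};H))}
  = \nmB{ \Big( \tau \sum_{k=0}^{j-1} \snm{A^{1/2}R^{j-k}f_k}_H^2 \Big)^{1/2} }_{L^q(\mathcal O)}
  =: G_j
\]
reduces \cref{eq:space-regu} to the bound $ \sum_{j} \mathbb E\,G_j^p \leqslant c\sum_k \mathbb E\,\nm{f_k}_{L^q(\mathcal O;H)}^p $. Since each $ G_j $ is a deterministic functional of the sample path $ (f_k(\omega))_{k} $, this in turn follows by taking expectations in the deterministic estimate
\begin{equation}
  \label{eq:star}
  \sum_{j=0}^\infty \nmB{ \Big( \tau \sum_{k=0}^{j-1} \snm{A^{1/2}(I+\tau A)^{-(j-k)} g_k}_H^2 \Big)^{1/2} }_{L^q(\mathcal O)}^p
  \leqslant c \sum_{k=0}^\infty \nm{g_k}_{L^q(\mathcal O;H)}^p ,
\end{equation}
required for all finitely supported $ (g_k) $ in $ L^q(\mathcal O;H) $.

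Before attacking \eqref{eq:star} I would record that $ B := \tau A $ inherits from $ A $ a bounded $ H^\infty $-calculus of the same angle and with the same bound, because $ \varphi \mapsto \varphi(\tau\,\cdot) $ is an isometry of $ \mathcal H_0^\infty(\Sigma_\sigma) $, and likewise \eqref{eq:z-A-inv} holds for $ B $ with an unchanged constant; this is precisely what makes every constant below uniform in $ \tau\in(0,1) $. Rewriting the kernels as $ A^{1/2}(I+\tau A)^{-m} = \tau^{-1/2}\,\phi_m(B) $ with $ \phi_m(z) := z^{1/2}(1+z)^{-m} $, and using the Dunford representation
\[
  \phi_m(B) = \frac{1}{2\pi i}\int_{\partial\Sigma_\sigma} z^{1/2}(1+z)^{-m}(z-B)^{-1}\,\mathrm dz
\]
on a contour inside the sector of analyticity (legitimate by \eqref{eq:z-A-inv}), exhibits the discrete kernels as $ H^\infty $-images of the uniformly bounded symbols $ \phi_m $.

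The core, and the \emph{main obstacle}, is then \eqref{eq:star} itself. Here I would use that, for $ q\in[2,\infty) $, $ L^q(\mathcal O) $ has Pisier's property~$(\alpha)$ and finite cotype, so that the Gaussian square function space $ \gamma(L^2(\mathbb R_{+};H),L^q(\mathcal O)) $ coincides, with equivalent norms, with $ L^q(\mathcal O;L^2(\mathbb R_{+};H)) $; thus each $ G_j $ is the $ \gamma $-norm of the step process built from $ \phi_{j-k}(B)g_k $. The bounded $ H^\infty $-calculus of $ B $, again through property~$(\alpha)$, yields the $ \mathcal R $-boundedness of the resolvent family $ \{z(z-B)^{-1}\} $ off the sector, and hence, by the Kalton–Weis $ \gamma $-multiplier theorem, square function estimates for $ B $. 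Feeding the representation above into this machinery, and carrying out the summation in the time index $ j $ by exploiting the off-diagonal decay of $ \phi_m $ (whose $ \ell^1 $-mass in $ m $ is controlled uniformly in $ \tau $), reduces \eqref{eq:star} to the continuous stochastic maximal $ L^p $-regularity inequality, valid precisely for $ p\in(2,\infty) $, $ q\in[2,\infty) $ in the spirit of \cite{Neerven2012}.

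I expect the delicate points to be two. First, the restriction $ p>2 $ is genuinely forced at the level of the vector-valued square function, not at the time level: in the scalar case $ A=\lambda $ the kernel $ a_m := \tau\lambda(1+\tau\lambda)^{-2m} $ satisfies $ \sum_{m\geqslant1} a_m = (2+\tau\lambda)^{-1}\leqslant 1/2 $, so \eqref{eq:star} would follow for every $ p\geqslant 2 $ by Young's inequality in $ \ell^{p/2} $; the obstruction at $ p=2 $, $ q>2 $ therefore lives entirely in the non-commutative $ L^q $-geometry, where the $ H^\infty $-calculus, $ \mathcal R $-boundedness and property~$(\alpha)$ are indispensable. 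Second, all constants must be made uniform in $ \tau $, which I would secure through the scaling invariance of the $ H^\infty $-bound noted above together with a discrete-to-continuous comparison between the kernels $ B^{1/2}(I+B)^{-\lceil s/\tau\rceil} $ and $ B^{1/2}e^{-sB} $; controlling the boundary terms in this comparison, and tracking the dependence of the square function constant on $ q $, is where the bookkeeping is heaviest.
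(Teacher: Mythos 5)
Your opening reduction is sound and is in fact the same entry point the paper uses implicitly: writing $Y_j=\sum_{k<j}(I+\tau A)^{-(j-k)}f_k\,\delta W_j$ and invoking \cref{lem:integral} does convert \cref{eq:space-regu} into a pathwise deterministic discrete square-function estimate, namely
\begin{equation*}
  \sum_{j}\nmB{\Big(\tau\sum_{k<j}\snm{A^{1/2}(I+\tau A)^{-(j-k)}g_k}_H^2\Big)^{1/2}}_{L^q(\mathcal O)}^p
  \leqslant c\sum_{k}\nm{g_k}_{L^q(\mathcal O;H)}^p .
\end{equation*}
The gap is that you do not prove this estimate; everything after that point is a list of tools rather than an argument. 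Two concrete problems. First, the ``off-diagonal decay'' you invoke is not there at the level you need it: $\nm{A^{1/2}(I+\tau A)^{-m}}_{\mathcal L(L^q(\mathcal O))}\sim (m\tau)^{-1/2}$, so $\tau\sum_m\nm{A^{1/2}(I+\tau A)^{-m}}^2$ diverges logarithmically and no Young/Schur argument in the index $j$ is available at the operator level (your scalar computation works precisely because for fixed $\lambda$ the numbers $\tau\lambda(1+\tau\lambda)^{-2m}$ are summable \emph{with constant independent of} $\lambda$; this does not survive replacing $\lambda$ by $A$). The summation over $j$ is exactly where the real work lies, and the mechanism the paper uses for it is the $\mathcal R$-boundedness of the averaging operators $\Pi_m$ of \cref{eq:calII-def} (\cref{lem:Pi_m}), which rests on the discrete Fefferman--Stein maximal inequality \cref{lem:Fefferman-Stein}; the restriction $p>2$ enters there through the duality exponent $(p/2)'$, not through ``non-commutative $L^q$-geometry'' in the diffuse sense you describe. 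Your proposal never identifies this maximal-function ingredient. Second, the proposed discrete-to-continuous comparison between $(\tau A)^{1/2}(I+\tau A)^{-\lceil s/\tau\rceil}$ and $(\tau A)^{1/2}e^{-sA}$, by which you hope to import the continuous result of van Neerven--Veraar--Weis, is asserted but not performed; the error operators would themselves have to be bounded in the very square-function norm you are trying to control, and there is no a priori reason they are uniformly small in $\tau$. This comparison is the hardest step of your plan and is left entirely open.

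For contrast, the paper avoids any transference to the continuous problem and argues intrinsically in the discrete setting: a discrete Laplace transform and contour deformation give the representation \cref{eq:A12Y} of $A^{1/2}Y$ as an integral of $\Psi(z)^2\mathcal I(z)f$ over $\Upsilon$; the square $\Psi(z)^2$ is split as $\Psi(z)\cdot\Psi(z)$ and one factor is moved to the dual side; the stochastic operators $\mathcal I(z)$ are stripped off by the Kalton--Weis multiplier theorem using their $\mathcal R$-boundedness (\cref{lem:I-R-bounded}, reduced to \cref{lem:Pi_m}); and the remaining factors are controlled by the square-function bounds of \cref{lem:varphi+-,lem:varphi+-*,lem:varphi,lem:varphi*}. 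If you want to pursue your deterministic reformulation, you would still need to supply discrete analogues of all of these steps --- in particular the maximal-function estimate --- so the reformulation does not by itself shorten the proof.
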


\begin{remark}
  \label{rem:parabolic}
  The discrete stochastic maximal $ L^p $-regularity estimate established in \cref{thm:space-regu}
  is useful for the numerical analysis of significant nonlinear SPDEs, including the stochastic Allen-Cahn
  equations and the stochastic Navier-Stokes equations.
  Here, we present a concise application of Theorem~\ref{thm:space-regu} to the numerical analysis of nonlinear
  stochastic parabolic equations as follows. Let \( T > 0 \) be a fixed time horizon, and let
  \( \mathcal{O} \subset \mathbb{R}^3 \) be a bounded domain with a sufficiently smooth boundary
  \( \partial \mathcal{O} \). Consider a sequence \( (f_n)_{n \in \mathbb{N}} \) of continuous functions from \( \mathcal{O} \times \mathbb{R} \) to \( \mathbb{R} \) that satisfy the following growth condition for each \( n \in \mathbb{N} \):
  \[
    |f_n(x, y)|\leqslant C_F(1 + |y|)
    \quad\text{for all $x \in \mathcal{O} $ and $ y \in \mathbb{R}$},
  \]
  where \( C_F \) is a constant independent of \( n \). Additionally, let \( (\lambda_n)_{n \in \mathbb{N}} \) be a sequence of non-negative real numbers such that
  \[
    \sum_{n \in \mathbb{N}} \lambda_n < \infty.
  \]
  We investigate the following nonlinear stochastic parabolic equation:
  \[
    \begin{cases}
      \mathrm{d}y(t,x) - \Delta y(t,x) \, \mathrm{d}t = \sum_{n \in \mathbb{N}} \sqrt{\lambda_n} f_n(x, y(t,x)) \, \mathrm{d}\beta_n(t), & t \in [0, T], \, x \in \mathcal{O}, \\
      y(t,x) = 0, & t \in (0, T], \, x \in \partial \mathcal{O}, \\
      y(0,x) = 0, & x \in \mathcal{O},
    \end{cases}
  \]
  where \( (\beta_n)_{n \in \mathbb{N}} \) represents a sequence of independent standard Brownian motions as defined in Section~\ref{sec:pre}.
  It is well-known that the negative Laplace operator with homogeneous Dirichlet boundary conditions meets
  the conditions of Theorem~\ref{thm:space-regu}; see, for instance, \cite[Proposition~9.8]{Kalton2006}.
  Let \( J \) be a positive integer and set \( \tau = \frac{T}{J} \). We consider the following temporal semidiscretization:
  \[
    \begin{cases}
      Y_{j+1}(x) - Y_j(x) - \tau \Delta Y_{j+1}(x) = \sum_{n \in \mathbb{N}} \sqrt{\lambda_n}
      \int_{j\tau}^{j\tau+\tau} f_n(x, Y_j(x)) \, \mathrm{d}\beta_n(t), & 0\leqslant j < J, \, x \in \mathcal{O}, \\
      Y_j(x) = 0, & 1\leqslant j\leqslant J, \, x \in \partial \mathcal{O}, \\
      Y_0(x) = 0, & x \in \mathcal{O}.
    \end{cases}
  \]
  For brevity, we henceforth refer to a quantity as ``bounded" to imply uniform boundedness with respect to \( \tau \).
  For any $ p \in (2,\infty) $, a routine computation reveals that
  $ \mathbb{E} \big[ \tau\sum_{j=1}^J \|Y_j\|_{L^2(\mathcal{O})}^p \big] $ is bounded,
  which implies by Theorem~\ref{thm:space-regu} that
  $ \mathbb{E} \big[ \tau \sum_{j=1}^J \|\nabla Y_j\|_{L^2(\mathcal{O})}^p \big] $ is also bounded.
  Applying Sobolev's embedding theorem, we further obtain that
  $ \mathbb{E} \big[ \tau \sum_{j=1}^J \|Y_j\|_{L^6(\mathcal{O})}^p \big] $ is
  bounded. Hence, invoking Theorem~\ref{thm:space-regu} once more yields that $ \mathbb{E} \big[\tau \sum_{j=1}^J \|\nabla Y_j\|_{L^6(\mathcal{O})}^p\big] $
  is bounded.
  By iterating this argument, we conclude that
  \[
    \mathbb{E}\left[ \tau \sum_{j=1}^J \|\nabla Y_j\|_{L^q(\mathcal{O})}^p \right]
    \quad\text{is bounded for all $ p,q \in (2,\infty)$}.
  \]
  This methodology was utilized in \cite{LiZhou2024-time}; interested readers are referred there for additional details.
  Furthermore, \cref{thm:time-regu} implies that
  \begin{align*}
    \mathbb E \left[
      \tau\sum_{j=0}^{J-1} \left\lVert \frac{Y_{j+1}-Y_j}{\sqrt\tau}\right\rVert_{L^q(\mathcal O)}^p
    \right] \quad\text{is bounded for all $ p,q \in (2,\infty)$}.
  \end{align*}
  Analogous stability results can be straightforwardly derived when the initial value is non-zero.
\end{remark}

\subsection{Proof of \texorpdfstring{\cref{thm:time-regu}}{}}
Let $\widetilde{A}$ be the natural extension of $A$ in $L^r(\Omega; L^q(\mathcal{O}))$.
It is straightforward to verify that $\widetilde{A}$ is a sectorial operator on $L^r(\Omega; L^q(\mathcal{O}))$.
Let $(r_n)_{n=1}^\infty$ be an arbitrary sequence of independent symmetric $\{-1, 1\}$-valued random variables on $[0, 1]$. For any $N \geqslant 1$, $(z_n)_{n=1}^N \subset \mathbb{C} \setminus \overline{\Sigma_{\theta_A}}$, and $(v_n)_{n=1}^N \subset L^r(\Omega; L^q(\mathcal{O}))$, we have
\begin{align*}
    & \left[
      \int_0^1 \Bigl\|
        \sum_{n=1}^N r_n(t) z_n(z_n - \widetilde{A})^{-1}v_n
      \Bigr\|_{L^r(\Omega; L^q(\mathcal{O}))}^2 \, \mathrm{d}t
    \right]^{1/2} \\
  \stackrel{\text{(i)}}{\leqslant} &
  c \left[
    \int_0^1 \Bigl\|
      \sum_{n=1}^N r_n(t) z_n(z_n - \widetilde{A})^{-1}v_n
    \Bigr\|_{L^r(\Omega; L^q(\mathcal{O}))}^r \, \mathrm{d}t
  \right]^{1/r} \\
  ={} &
  c \left[
    \mathbb{E} \int_0^1 \Bigl\|
      \sum_{n=1}^N r_n(t) z_n(z_n - \widetilde{A})^{-1}v_n
    \Bigr\|_{L^q(\mathcal{O})}^r
    \, \mathrm{d}t
  \right]^{1/r} \\
  ={} &
  c \left[
    \mathbb{E} \int_0^1 \Bigl\|
      \sum_{n=1}^N r_n(t) z_n(z_n - A)^{-1}v_n
    \Bigr\|_{L^q(\mathcal{O})}^r
    \, \mathrm{d}t
  \right]^{1/r} \\
  \stackrel{\text{(ii)}}{\leqslant} &
  c \left[
    \mathbb{E} \left(
      \int_0^1
      \Bigl\|
        \sum_{n=1}^N r_n(t) z_n(z_n - A)^{-1}v_n
      \Bigr\|_{L^q(\mathcal{O})}^2
      \, \mathrm{d}t
    \right)^{r/2}
  \right]^{1/r},
\end{align*}
where inequalities (i) and (ii) follow from the Kahane-Khintchine inequality (see, e.g., \cite[Theorem 6.2.4]{HytonenWeis2017}).
By the $\mathcal{R}$-boundedness of the set $\{z(z - A)^{-1} \mid z \in \mathbb{C} \setminus \overline{\Sigma_{\theta_A}}\}$, we further deduce that
\begin{align*}
    & \left[
      \int_0^1 \Bigl\|
        \sum_{n=1}^N r_n(t) z_n(z_n - \widetilde{A})^{-1}v_n
      \Bigr\|_{L^r(\Omega; L^q(\mathcal{O}))}^2 \, \mathrm{d}t
    \right]^{1/2} \\
    \leqslant{}
    & c \mathcal{R}\left(\{z(z - A)^{-1} \mid z \in \mathbb{C} \setminus \overline{\Sigma_{\theta_A}}\}\right)
    \left[
      \mathbb{E} \left(
        \int_0^1
        \Bigl\|
          \sum_{n=1}^N r_n(t) v_n
        \Bigr\|_{L^q(\mathcal{O})}^2
        \, \mathrm{d}t
      \right)^{r/2}
    \right]^{1/r}.
\end{align*}
Applying the Kahane-Khintchine inequality once more, we obtain
\begin{align*}
     \left[
      \mathbb{E} \left(
        \int_0^1
        \Bigl\|
          \sum_{n=1}^N r_n(t) v_n
        \Bigr\|_{L^q(\mathcal{O})}^2
        \, \mathrm{d}t
      \right)^{r/2}
    \right]^{1/r} 
     & \leqslant
    c \left[
      \mathbb{E} \int_0^1 \Bigl\|
        \sum_{n=1}^N r_n(t) v_n
      \Bigr\|_{L^q(\mathcal{O})}^r
      \, \mathrm{d}t
    \right]^{1/r} \\
    &=
    c \left[
      \int_0^1 \Bigl\|
        \sum_{n=1}^N r_n(t) v_n
      \Bigr\|_{L^r(\Omega; L^q(\mathcal{O}))}^r
      \, \mathrm{d}t
    \right]^{1/r} \\
    & \leqslant
    c \left[
      \int_0^1 \Bigl\|
        \sum_{n=1}^N r_n(t) v_n
      \Bigr\|_{L^r(\Omega; L^q(\mathcal{O}))}^2
      \, \mathrm{d}t
    \right]^{1/2}.
\end{align*}
Combining these results, we conclude that
\begin{align*}
    & \left[
      \int_0^1 \Bigl\|
        \sum_{n=1}^N r_n(t) z_n(z_n - \widetilde{A})^{-1}v_n
      \Bigr\|_{L^r(\Omega; L^q(\mathcal{O}))}^2 \, \mathrm{d}t
    \right]^{1/2} \\
    \leqslant{}
    & c \mathcal{R}\big(\{z(z - A)^{-1} \mid z \in \mathbb{C} \setminus \overline{\Sigma_{\theta_A}}\}\big)
    \left[
      \int_0^1 \Bigl\|
        \sum_{n=1}^N r_n(t) v_n
      \Bigr\|_{L^r(\Omega; L^q(\mathcal{O}))}^2
      \, \mathrm{d}t
    \right]^{1/2}.
\end{align*}
It follows that $ \{z(z-\widetilde A)^{-1} \mid z \in \mathbb C \setminus\overline{\Sigma_{\theta_A}}\} $
is $ \mathcal R $-bounded in $ \mathcal L(L^r(\Omega;L^q(\mathcal O))) $. Moreover,
\cite[Proposition~4.2.15]{HytonenWeis2016} implies that $
L^r(\Omega;L^q(\mathcal O)) $ is a UMD space. Therefore,
we use \cite[Theorem 3.2]{Kemmochi2016} to conclude that
\begin{align*}
  \left[
    \sum_{j=0}^\infty \nmB{
      \frac{Y_{j+1} - Y_j
    }\tau }_{L^r(\Omega;L^q(\mathcal O))}^p
  \right]^{1/p} \leqslant c \left[
    \sum_{j=0}^\infty \nmB{
      \frac{f_j}\tau \delta W_j
    }_{L^r(\Omega;L^q(\mathcal O))}^p
  \right]^{1/p},
\end{align*}
which implies
\begin{align*} 
  \left[
    \sum_{j=0}^\infty
    \nmB{ \frac{Y_{j+1} - Y_j }{\sqrt\tau}}_{L^r(\Omega;L^q(\mathcal O))}^p
  \right]^{1/p} \leqslant c \left[
    \sum_{j=0}^\infty \nmB{
      \frac{f_j}{\sqrt\tau} \delta W_j
    }_{L^r(\Omega;L^q(\mathcal O))}^p
  \right]^{1/p}.
\end{align*}
Consequently, the desired inequality \cref{eq:time-regu} follows from the
estimate
\begin{align*}
  \bigg[
    \sum_{j=0}^\infty \nmB{
      \frac{f_j}{\sqrt\tau} \delta W_j
    }_{L^r(\Omega;L^q(\mathcal O))}^p
  \bigg]^{1/p} 
  &\leqslant
  c \bigg[
    \sum_{j=0}^\infty
    \norm{f_j}_{L^r(\Omega;L^q(\mathcal O;H))}^p
  \bigg]^{1/p} \quad\text{(by \cref{lem:integral})} \\
  &= c \norm{f}_{\ell^p(L^r(\Omega;L^q(\mathcal O;H)))}.
\end{align*}
This completes the proof of \cref{thm:time-regu}.

\subsection{Proof of \texorpdfstring{\cref{thm:space-regu}}{}}
\label{ssec:space-regu}
Throughout this subsection, we will assume that the conditions in
\cref{thm:space-regu} are always satisfied. Firstly, let us introduce
some notations. Let $ A^* $ be the dual operator of $ A $ on $ L^{q'}(\mathcal O) $.
It is standard that $ A^* $ is a sectorial operator on $ L^{q'}(\mathcal O) $;
see, e.g., \cite[Theorem~2.4.1]{Sinha2017book}.
Moreover, $ A^* $ has a bounded $ H^\infty $-calculus 
as $ A $; see \cite[Proposition 10.2.20]{HytonenWeis2017}.

\begin{figure}[ht]
  \centering
  \includegraphics[width=0.5\textwidth]{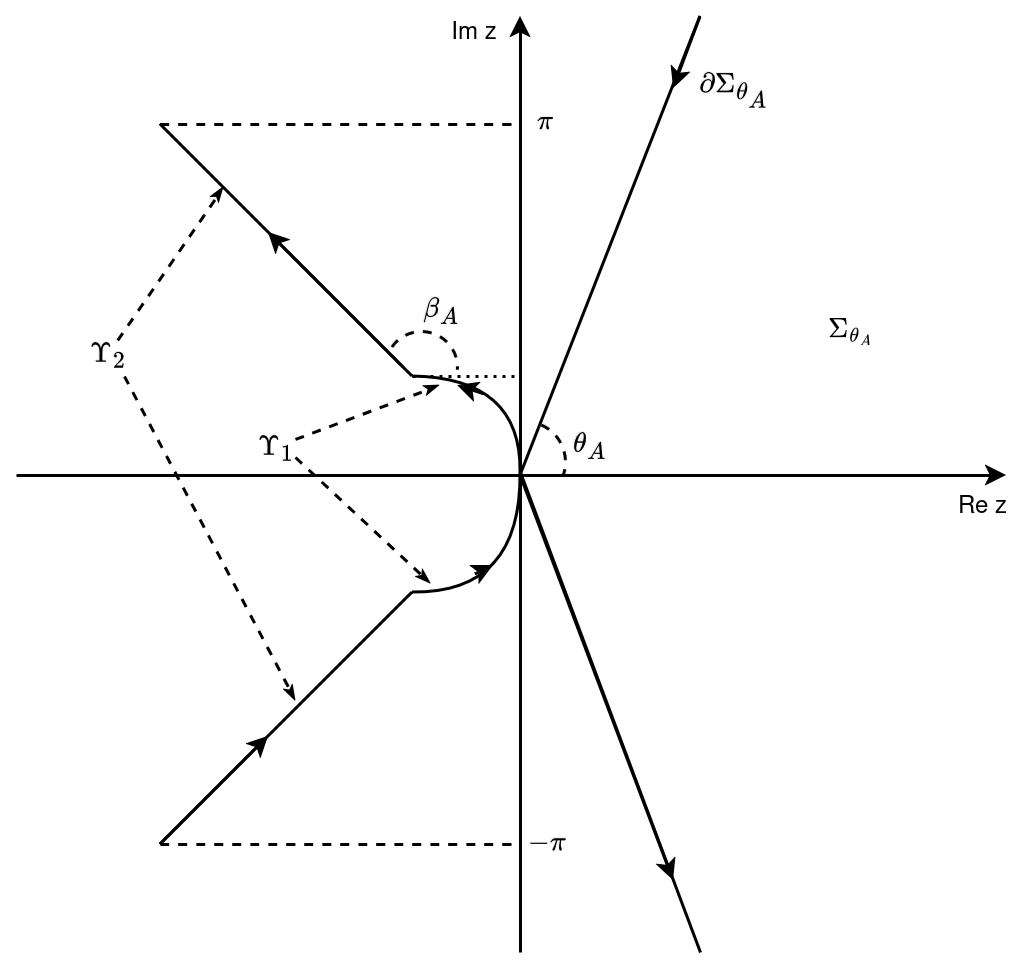}
  \caption{The orientations of $ \Upsilon_1 $, $ \Upsilon_2 $ and $ \partial\Sigma_{\theta_A} $.}
  \label{fig:upsilon_sigma}
\end{figure}

Suppose the complex logarithm, denoted by \( \log \), is confined to the
strip \( \{z \in \mathbb{C} \mid -\pi < \operatorname{Im} z \leqslant \pi\} \).
Choose any \( 0 < \epsilon_A < \cot\theta_A \). 
A simple calculation gives the existence of a positive real number \( k_A \)
such that
\[
e^{-x} \cos y - 1 \leqslant \epsilon_A e^{-x} |\sin y|
\]
holds for all \( x \geqslant -k_A |y| \) with \( y \in [-\pi, \pi] \).
This leads to
\[
  \operatorname{Re}(e^{-(x+iy)}-1) \leqslant
  \epsilon_A |\operatorname{Im}(e^{-(x+iy)}-1)|
\]
for the same conditions on \( x \) and \( y \). Consequently,
\begin{equation}
  \label{eq:lost}
  |\operatorname{Arg}(e^{-z}-1)| \geqslant \operatorname{arccot}\epsilon_A > \theta_A
  \quad\text{for all $ z \in \Sigma_{\beta_A} $ with $ -\pi \leqslant \operatorname{Im} z \leqslant \pi $},
\end{equation}
where \( \beta_A := \pi/2 + \arctan k_A \).
By selecting \( \alpha_A \) sufficiently large within the interval
\( (\theta_A,\pi/2) \), we can establish the existence of \( \alpha_A \)
such that the curve described by \( \{-\log(1+re^{i\alpha_A}) \mid r > 0\} \)
intersects the boundary \( \partial\Sigma_{\beta_A} \)
at a unique point. We then define
\begin{align*}
  \Upsilon
  &:= \Upsilon_1 \cup \Upsilon_2, \\
  \Upsilon_1
  &:= \Big\{
    -\log(1 + re^{i\alpha_A}) \big| \, 0 \leqslant r \leqslant r_A
  \Big\} \bigcup \Big\{
    -\log(1 + re^{-i\alpha_A}) \big| \, 0 \leqslant r \leqslant r_A
  \Big\}, \\
  \Upsilon_2
  &:= \left\{
    re^{-i\beta_A} \Big| \, |\log(1+r_Ae^{i\alpha_A})| \leqslant r \leqslant \frac{\pi}{\cos(\beta_A-\pi/2)}
  \right\} \\
  & \qquad \bigcup \left\{
    re^{i\beta_A} \Big| \, |\log(1+r_Ae^{i\alpha_A})| \leqslant r \leqslant \frac{\pi}{\cos(\beta_A-\pi/2)}
  \right\},
\end{align*}
where \( r_A \) is the positive number for which \( -\log(1+r_Ae^{i\alpha_A}) \) corresponds to the aforementioned intersection point.
Given the compactness of \( \{e^{-z}-1 \mid z \in \Upsilon_2\} \) and its
non-intersection with \( \partial\Sigma_{\theta_A} \), it follows that
\begin{equation}
  \label{eq:upsilon2}
  \inf_{z \in \Upsilon_2} \inf_{\lambda \in \Sigma_{\theta_A}} |e^{-z} - 1 - \lambda| > 0.
\end{equation}
In addition, it is easily verified that
\begin{equation}
  \label{eq:auxi-bound}
  \sup_{z \in \Upsilon\setminus\{0\}}
  \frac{\snm{e^{-z}-1}}{\ln\snm{e^{-z}}} < \infty.
\end{equation}
For any $ z \in \Sigma_{\theta_A} $, define
\begin{align}
  \varphi_{+}(z) &:= (2\pi i)^{-1/2} e^{i\alpha_A/4} z^{1/4}
  (-e^{i\alpha_A} + z)^{-1/2},
  \label{eq:varphi+} \\
  \varphi_{+}^*(z) &:= (-2\pi i)^{-1/2} e^{-i\alpha_A/4}
  z^{1/4} (-e^{-i\alpha_A} + z)^{-1/2},
  \label{eq:varphi+*} \\
  \varphi_{-}(z) &:= (-2\pi i)^{-1/2} e^{-i\alpha_A/4}
  z^{1/4} (-e^{-i\alpha_A} + z)^{-1/2}, 
  \label{eq:varphi-} \\
  \varphi_{-}^*(z) &:= (2\pi i)^{-1/2} e^{i\alpha_A/4}
  z^{1/4} (-e^{i\alpha_A} + z)^{-1/2}.
  \label{eq:vaprhi-*}
\end{align}
It is evident that the above four functions all belong to $ \mathcal H_0^\infty(\Sigma_{\theta_A}) $.
For any \( z \in \Upsilon \setminus \{0\} \),
two additional functions, \( \Psi \) and \( \Psi^* \), are defined:
\begin{align}
  \Psi(z) &:= (2\pi i)^{-1/2} (e^{-z}-1)^{-1/4} (\tau A)^{1/4} (1 - e^{-z} + \tau A)^{-1/2}, \label{eq:varphi} \\
  \Psi^*(z) &:= (-2\pi i)^{-1/2} (e^{-\bar z}-1)^{-1/4} (\tau A^*)^{1/4} (1 - e^{-\bar z} + \tau A^*)^{-1/2}.
  \label{eq:varphi*}
\end{align}
Lastly, we introduce a family of operators, \( \{\mathcal I(z) \mid z \in \Upsilon\} \),
which map between \( \ell_{\mathbb F}^p(L^p(\Omega;L^q(\mathcal O;H))) \)
and \( \ell^p(L^p(\Omega;L^q(\mathcal O))) \), as follows: for any $ z \in \Upsilon $ and
$ g \in \ell_{\mathbb F}^p(L^p(\Omega;L^q(\mathcal O;H))) $, 
$ \mathcal I(z)g $ is defined by
\begin{subequations}
  \label{eq:I-def}
  \begin{numcases}{}
    \big( \mathcal I(z)g \big)_0 := 0, \\
    \big( \mathcal I(z)g \big)_j := \frac1{\sqrt\tau}\sum_{k=0}^{j-1} 
    (e^{-z} - 1)^{1/2} e^{(j-k-1)z} g_k \delta W_k,
    \quad j \geqslant 1.
  \end{numcases}
\end{subequations}
In \cref{sec:some_proofs} we demonstrate that the operators
$ \mathcal I(z) $, $ z \in \Upsilon $, are indeed bounded linear
operators from $ \ell_{\mathbb F}^p(L^p(\Omega;L^q(\mathcal O;H))) $ to
$ \ell^p(L^p(\Omega;L^q(\mathcal O))) $.

Secondly, let us introduce the $ \mathcal R $-boundedness of $ \{\mathcal I(z) \mid z \in \Upsilon \}  $ and
the square function bounds associated with $ \varphi_{+} $, $ \varphi_{+}^* $, $ \varphi_{-} $, 
$ \varphi_{-}^* $, $ \Psi $ and $ \Psi^* $.
For the clarity of the presentation of the main idea of the proof of
\cref{thm:space-regu}, we put some technical lemmas in \cref{sec:some_proofs}.

\begin{lemma}
  \label{lem:I-R-bounded}
  $ \mathcal R(\{\mathcal I(z) \mid z \in \Upsilon\}) $
  is uniformly bounded with respect to the time step $ \tau $.
\end{lemma}
\begin{proof}
  Let $ \{\Pi_m \mid m \in \mathbb N \} $ be defined by
  \cref{eq:calII-def}.
  For any $ z \in \Upsilon\setminus\{0\} $ and
  $ g \in \ell_{\mathbb F}^{p}(L^p(\Omega;L^q(\mathcal O;H))) $,
  by \cref{eq:I-def} and the identity
  \(e^{(j-k-1)z} = (1-e^z) \sum_{m=j-k-1}^\infty e^{mz} \),
  a direct calculation gives
  \begin{align*} 
    \big(\mathcal I(z)g\big)_j
    = \sum_{m=0}^\infty \sqrt{1+m} \, (e^{-z}-1)^{3/2}
    e^{(m+1)z} (\Pi_{m}g)_j, \quad \forall j \geqslant 1.
  \end{align*}
  Considering further that $ (\mathcal I(z)g)_0 = 0 $
  for all $ z \in \Upsilon\setminus\{0\} $ and
  $ (\Pi_mg)_0 = 0 $ for each $ m \in \mathbb N $, we arrive
  at the representation
  \begin{equation}
    \label{eq:I-IIm}
    \mathcal I(z)  = \sum_{m=0}^\infty
    \sqrt{1+m} \, (e^{-z}-1)^{3/2} e^{(1+m)z} \,
    \Pi_{m}, \quad \forall z \in \Upsilon\setminus\{0\}.
  \end{equation}
  For any $ z \in \Upsilon \setminus \{0\} $, an elementary calculation using the fact $ \operatorname{Re} z < 0 $ yields
  \begin{align*} 
    \sum_{m=0}^\infty
    \sqrt{1+m} \, \snmb{ (e^{-z}-1)^{3/2} e^{(1+m)z} } 
    \leqslant \snm{e^{-z}-1}^{3/2}
    \int_0^\infty \sqrt{x+1} \, \snm{e^z}^{x} \, \mathrm{d}x.
  \end{align*}
  Upon introducing the change of variable
  $ y = -x\ln\snm{e^z} $, we further obtain
  \begin{align*} 
     \sum_{m=0}^\infty
    \sqrt{1+m} \, \snmb{ (e^{-z}-1)^{3/2} e^{(1+m)z} }
    \leqslant
    \frac{\snm{e^{-z}-1}}{\ln\snm{e^{-z}}}
    \int_0^\infty \sqrt{
      \snm{e^{-z}-1}+ \frac{\snm{e^{-z}-1}}{\ln\snm{e^{-z}}} y 
    } \, \, e^{-y} \, \mathrm{d}y.
  \end{align*}
  Hence, from the uniform boundedness established in \cref{eq:auxi-bound}, we deduce that the supremum
  \begin{align*} 
    \mathcal M := \sup_{z \in \Upsilon \setminus\{0\}} \, \sum_{m=0}^\infty
    \sqrt{1+m} \, \snmb{ (e^{-z}-1)^{3/2} e^{(1+m)z} } 
  \end{align*}
  is finite. By Kahane's contraction principle (see \cite[Proposition~6.1.13]{HytonenWeis2017})
  and \cref{lem:Pi_m}, we infer that the $ \mathcal R $-bound of the set
  \[
    \left\{e^{i\theta}
      \mathcal M \Pi_m \mid z \in \Upsilon, \, m \in \mathbb N, \, \theta \in (-\pi,\pi]
    \right\}
  \]
  remains uniformly bounded with respect to $ \tau $.
  Given that the set $ \{\mathcal I(z) \mid z \in \Upsilon\} $,
  according to \cref{eq:I-IIm} and acknowledging $ \mathcal I(0) = 0 $,
  resides within the absolute convex hull of 
  the aforementioned set, we use the convexity of $ \mathcal R$-bounds (see \cite[Proposition~8.1.21]{HytonenWeis2017}) to conclude that
  the $ \mathcal R$-bound of \( \{\mathcal I(z) \mid z \in \Upsilon\} \) is also uniformly
  bounded with respect to the time step $ \tau $.
  This completes the proof.
\end{proof}

By the square function bounds established in \cite[Theorem~10.4.16]{HytonenWeis2017},
in conjunction with the properties of cotype as stated in \cite[Proposition~7.1.4]{HytonenWeis2017},
we obtain the following square function estimates for the functions $ \varphi_{-} $, $ \varphi_{+} $, $ \varphi_{-}^* $, and $ \varphi_{+}^* $.

\begin{lemma}
  \label{lem:varphi+-}
  For any $ g \in \ell^p(L^p(\Omega;L^q(\mathcal O;H))) $, we have
  \begin{align}
    \norm{\varphi_{-}(rA)g}_{
      \gamma(
      L^2(\mathbb R_{+},\frac{\mathrm{d}r}r), \,
      \ell^p(L^p(\Omega;L^q(\mathcal O;H)))
      )
    } \leqslant c \norm{ g }_{
      \ell^p(L^p(\Omega;L^q(\mathcal O;H)))
    }, \label{eq:varphi+A} \\
    \norm{\varphi_{+}(rA)g}_{
      \gamma(
      L^2(\mathbb R_{+},\frac{\mathrm{d}r}r), \,
      \ell^p(L^p(\Omega;L^q(\mathcal O;H)))
      )
    } \leqslant c \norm{ g }_{
      \ell^p(L^p(\Omega;L^q(\mathcal O;H)))
    }. \label{eq:varphi-A}
  \end{align}
\end{lemma}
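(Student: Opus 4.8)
The plan is to read both inequalities as generalized square function estimates for the natural extension of $A$ to the Banach space $X := \ell^p(L^p(\Omega;L^q(\mathcal O;H)))$, and to deduce them from the bounded $H^\infty$-calculus of $A$ together with the geometry of $X$. Since $\varphi_{+}$ and $\varphi_{-}$ in \cref{eq:varphi-} and \cref{eq:varphi+} differ only by replacing $\alpha_A$ with $-\alpha_A$ (i.e.\ by conjugating the relevant parameters), it suffices to treat one of them, say $\varphi_{-}$; the argument for $\varphi_{+}$ is then verbatim the same, so that \cref{eq:varphi+A} and \cref{eq:varphi-A} are obtained simultaneously.

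First I would check that $\varphi_{-} \in \mathcal H_0^\infty(\Sigma_\sigma)$ for some $\sigma \in (\theta_A,\alpha_A)$. From \cref{eq:varphi+}, near the origin $\snm{\varphi_{-}(z)} \sim \snm{z}^{1/4}$, since the factor $(-e^{-i\alpha_A}+z)^{-1/2}$ stays bounded there, while for large $\snm{z}$ one has $\snm{\varphi_{-}(z)} \sim \snm{z}^{1/4-1/2} = \snm{z}^{-1/4}$; hence $\varphi_{-}$ meets the decay required in the definition of $\mathcal H_0^\infty$ with $\varepsilon = 1/4$. The only singularity of $\varphi_{-}$ is the branch point at $z = e^{-i\alpha_A}$, lying on the ray of argument $-\alpha_A$; choosing the branch cut to run outward along that ray and taking $\sigma < \alpha_A$ keeps $\varphi_{-}$ analytic on $\Sigma_\sigma$. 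Because $A$, and hence its extension, has an $H^\infty$-calculus of angle $< \theta_A < \sigma$, the operator $\varphi_{-}(rA)$ is well defined for every $r > 0$ through the calculus.

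Next I would transfer the bounded $H^\infty$-calculus from $A$ on $L^q(\mathcal O)$ to its extension $\bar A$ on $X$. The extensions induced by $\ell^p$ and by $L^p(\Omega;\cdot)$ act pointwise in those variables and therefore preserve the operator norm of each $\varphi(A)$, leaving the $H^\infty$-bound unchanged. The extension in the multiplicity variable, from $L^q(\mathcal O)$ to $L^q(\mathcal O;H)$, is the delicate step: here I would use that in the UMD space $L^q(\mathcal O)$ a bounded $H^\infty$-calculus is automatically $\mathcal R$-bounded (Kalton--Weis), and that an $\mathcal R$-bounded $H^\infty$-calculus extends to the $H$-valued space $L^q(\mathcal O;H)$ with the same angle. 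Combining these facts, $\bar A$ has a bounded $H^\infty$-calculus on $X$ of angle $< \theta_A$.

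Finally, $X$ is a UMD space (as already noted for $L^r(\Omega;L^q(\mathcal O))$ in the proof of \cref{thm:time-regu}) and, being built from $L^p$-, $\ell^p$- and $L^q$-spaces with finite exponents together with a Hilbert space, it has finite cotype. I would then invoke the generalized square function estimate relating a bounded $H^\infty$-calculus to $\gamma$-norms (see, e.g., \cite[Section~10.4]{HytonenWeis2017}): for a sectorial operator with bounded $H^\infty$-calculus on a space of finite cotype and any $\varphi \in \mathcal H_0^\infty(\Sigma_\sigma)$ with $\sigma$ exceeding its angle, one has
\[
  \nm{\varphi(\cdot\,\bar A)g}_{\gamma(L^2(\mathbb R_{+},\frac{\mathrm{d}r}r),\,X)} \leqslant c\,\nm{g}_X .
\]
Applying this with $\varphi = \varphi_{-}$ and then $\varphi = \varphi_{+}$ yields \cref{eq:varphi+A} and \cref{eq:varphi-A}. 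The main obstacle is the $H$-valued extension of the third paragraph: plain boundedness of the $H^\infty$-calculus of $A$ does not by itself pass to $L^q(\mathcal O;H)$, and one must first upgrade it to the $\mathcal R$-bounded calculus before the square function theorem can be applied on $X$.
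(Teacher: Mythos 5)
Your proposal is correct and follows essentially the same route as the paper, which proves this lemma simply by citing the square function estimate \cite[Theorem~10.4.16]{HytonenWeis2017}; your argument is just a fleshed-out verification of that theorem's hypotheses (that $\varphi_{\pm}\in\mathcal H_0^\infty(\Sigma_\sigma)$ for some $\sigma\in(\theta_A,\alpha_A)$, that the $H^\infty$-calculus transfers to the extension of $A$ on $\ell^p(L^p(\Omega;L^q(\mathcal O;H)))$ via the Kalton--Weis $\mathcal R$-bounded calculus, and that this space is UMD with finite cotype) followed by its application.
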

\begin{lemma} 
  \label{lem:varphi+-*}
  For any $ g \in \ell^{p'}(L^{p'}(\Omega;L^{q'}(\mathcal O))) $, we have
  \begin{align}
    \norm{\varphi_{-}^*(rA^*)g}_{
      \gamma(L^2(\mathbb R_{+}, \frac{\mathrm{d}r}r), \ell^{p'}(L^{p'}(\Omega;L^{q'}
      (\mathcal O))))
    } \leqslant c \norm{g}_{
      \ell^{p'}(L^{p'}(\Omega;L^{q'}(\mathcal O)))
    }, \label{eq:varphi*+A} \\
    \norm{\varphi_{+}^*(rA^*)g}_{
      \gamma(L^2(\mathbb R_{+}, \frac{\mathrm{d}r}r), \ell^{p'}(L^{p'}(\Omega;L^{q'}
      (\mathcal O))))
    } \leqslant c \norm{g}_{
      \ell^{p'}(L^{p'}(\Omega;L^{q'}(\mathcal O)))
    }. \label{eq:varphi*-A}
  \end{align}
\end{lemma}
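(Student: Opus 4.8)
The plan is to read \cref{lem:varphi+-*} as the $A^*$-version of \cref{lem:varphi+-} and to prove it through the square function estimate associated with a bounded $H^\infty$-calculus. Comparing the definitions shows that, as scalar functions, $\varphi_{+}^* = \varphi_{-}$ and $\varphi_{-}^* = \varphi_{+}$ (cf. \cref{eq:varphi-,eq:varphi-*,eq:varphi+,eq:vaprhi+*}); thus the present statement is \cref{lem:varphi+-} with $A$ and $(p,q)$ replaced by $A^*$ and $(p',q')$, the Hilbert fibre $H$ being dropped. Accordingly, I first extend $A^*$ fibrewise to a densely defined sectorial operator $\widetilde{A^*}$ on $E := \ell^{p'}(L^{p'}(\Omega;L^{q'}(\mathcal O)))$, acting only in the $L^{q'}(\mathcal O)$ variable (exactly as $\widetilde A$ was formed in the proof of \cref{thm:time-regu}), so that $\varphi_{\pm}^*(rA^*) = \varphi_{\pm}^*(r\widetilde{A^*})$ fibrewise.

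Two inputs are then needed. First, $\widetilde{A^*}$ should carry a bounded $H^\infty$-calculus of the same angle as $A^*$: the text already records that $A^*$ inherits a bounded $H^\infty$-calculus from $A$ (\cite[Proposition~10.2.20]{HytonenWeis2017}), and since $\widetilde{A^*}$ acts fibrewise, $\psi(\widetilde{A^*})$ is the fibrewise extension of $\psi(A^*)$ with $\nm{\psi(\widetilde{A^*})} = \nm{\psi(A^*)}$, so the calculus bound and its angle transfer at once. (Here $A^*$ is injective with dense domain and range because $L^{q'}(\mathcal O)$ is reflexive.) Second, $E$ is an iterated $L^{p'}$-space, hence a UMD space of finite cotype with Pisier's property $(\alpha)$ (see \cite[Chapter~7]{HytonenWeis2017}), which is precisely the geometric hypothesis under which a bounded $H^\infty$-calculus yields $\gamma$-square function estimates.

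It remains to certify $\varphi_{\pm}^*$ as admissible multipliers. From \cref{eq:varphi-*,eq:vaprhi+*} one reads off $\varphi_{\pm}^*(z) = O(\snm{z}^{1/4})$ as $z \to 0$ and $\varphi_{\pm}^*(z) = O(\snm{z}^{-1/4})$ as $z \to \infty$, so with $\varepsilon = 1/4$ each lies in $\mathcal H_0^\infty(\Sigma_\sigma)$. Their only singularities are the branch points $z = e^{-i\alpha_A}$ for $\varphi_{+}^*$ and $z = e^{i\alpha_A}$ for $\varphi_{-}^*$, which sit on the rays $\operatorname{Arg} z = \mp\alpha_A$; hence both functions are analytic on $\Sigma_\sigma$ for every $\sigma < \alpha_A$. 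Since $\omega_{H^\infty}(A^*) = \omega_{H^\infty}(A) < \theta_A < \alpha_A$, I may fix $\sigma \in (\omega_{H^\infty}(A^*), \alpha_A)$ for which $\widetilde{A^*}$ has a bounded $H^\infty(\Sigma_\sigma)$-calculus while $\varphi_{\pm}^* \in \mathcal H_0^\infty(\Sigma_\sigma)$ simultaneously. Applying the square function estimate for a bounded $H^\infty$-calculus on a space with property $(\alpha)$ (see, e.g., \cite[Section~10.4]{HytonenWeis2017}) to $\widetilde{A^*}$ with the multipliers $\varphi_{\pm}^*$ then gives $\nm{\varphi_{\pm}^*(r\widetilde{A^*})g}_{\gamma(L^2(\mathbb R_+,\mathrm{d}r/r),E)} \le c\nm{g}_E$, which is exactly \cref{eq:varphi*+A,eq:varphi*-A}.

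The growth and decay bookkeeping for $\varphi_{\pm}^*$ and the fibrewise transfer of the calculus are routine; the substantive step — the one to get right — is the invocation of the square function theorem, that is, confirming that $E$ carries the required geometric structure (finite cotype/property $(\alpha)$) and that the extended operator keeps its $H^\infty$-angle strictly below $\alpha_A$, so that $\varphi_{\pm}^*$ genuinely lie in $\mathcal H_0^\infty$ of a sector on which the calculus is bounded. With these in hand the estimate follows, and the argument is otherwise identical to that for \cref{lem:varphi+-}.
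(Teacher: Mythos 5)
Your proposal is correct and follows essentially the same route as the paper, which disposes of \cref{lem:varphi+-,lem:varphi+-*} by citing the square function estimate for operators with a bounded $H^\infty$-calculus \cite[Theorem~10.4.16]{HytonenWeis2017}; you simply make explicit the verifications (the identities $\varphi_{\pm}^*=\varphi_{\mp}$, the fibrewise extension of $A^*$, the geometry of $\ell^{p'}(L^{p'}(\Omega;L^{q'}(\mathcal O)))$, and the membership $\varphi_{\pm}^*\in\mathcal H_0^\infty(\Sigma_\sigma)$ for $\sigma\in(\omega_{H^\infty}(A^*),\alpha_A)$) that the paper leaves implicit. No gaps.
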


We proceed to establish a square function estimate for $ \Psi $ as follows.
\begin{lemma}
  \label{lem:varphi}
  For any $ f \in \ell^p(L^p(\Omega;L^q(\mathcal O;H))) $, we have
  \begin{equation}
    \label{eq:varphi-esti}
    \norm{\Psi f}_{
      \gamma(L^2(\Upsilon_2,\snm{\mathrm{d}z}), \ell^p(L^p(\Omega;L^q(\mathcal O;H))))
    } \leqslant c \norm{f}_{\ell^p(L^p(\Omega;L^q(\mathcal O;H)))}.
  \end{equation}
\end{lemma}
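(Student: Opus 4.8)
The plan is to exploit the fact that $ \Upsilon_2 $ is a compact arc bounded away from the origin, so that the $ \gamma $-norm over $ L^2(\Upsilon_2,\snm{\mathrm{d}z}) $ can be dominated by the ordinary Bochner norm $ L^2(\Upsilon_2;X) $, reducing the whole statement to a uniform operator bound on $ \Psi(z) $. Set $ X := \ell^p(L^p(\Omega;L^q(\mathcal O;H))) $. Since $ p \in (2,\infty) $, $ q \in [2,\infty) $ and $ H $ is a Hilbert space, $ X $ is assembled from the type-$ 2 $ spaces $ \ell^p $, $ L^p(\Omega) $, $ L^q(\mathcal O) $ and $ H $; as type $ 2 $ is stable under forming $ \ell^p $- and $ L^p $-valued spaces for exponents in $ [2,\infty) $, the space $ X $ has type $ 2 $. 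Consequently the identity map $ L^2(\Upsilon_2;X) \hookrightarrow \gamma(L^2(\Upsilon_2,\snm{\mathrm{d}z});X) $ is bounded (a standard consequence of type $ 2 $; see, e.g., \cite[Theorem~9.2.10]{HytonenWeis2017}), whence
\[
  \nm{\Psi f}_{\gamma(L^2(\Upsilon_2,\snm{\mathrm{d}z});X)}
  \leqslant c \Big( \int_{\Upsilon_2} \nm{\Psi(z)f}_X^2 \,\snm{\mathrm{d}z} \Big)^{1/2}
  \leqslant c \, \snm{\Upsilon_2}^{1/2} \Big( \sup_{z \in \Upsilon_2} \nm{\Psi(z)}_{\mathcal L(X)} \Big) \nm{f}_X .
\]
Because $ \Upsilon_2 $ has finite length, the lemma reduces to proving $ \sup_{z \in \Upsilon_2} \nm{\Psi(z)}_{\mathcal L(X)} < \infty $, with a constant independent of $ \tau $.

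Next I would establish this uniform bound through the bounded $ H^\infty $-calculus of $ A $. Writing $ \Psi(z) = (2\pi i)^{-1/2}(e^{-z}-1)^{-1/4}\, g_z(\tau A) $ with $ g_z(\zeta) := \zeta^{1/4}(1-e^{-z}+\zeta)^{-1/2} $, I first note that on $ \Upsilon_2 $ one has $ z = re^{\pm i\beta_A} $ with $ \beta_A \in (\pi/2,\pi) $ and $ r > 0 $, so $ \operatorname{Re} z = r\cos\beta_A < 0 $, hence $ e^{-z} \neq 1 $; by compactness $ \snm{e^{-z}-1}^{-1/4} $ is then bounded above and below, controlling the scalar factor. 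For the operator factor, the only singularity of $ g_z $ sits at $ \zeta = e^{-z}-1 $, whose argument obeys $ \snm{\operatorname{Arg}(e^{-z}-1)} \geqslant \alpha_A $ with $ \alpha_A \in (\theta_A,\pi/2) $. Choosing $ \sigma $ strictly between the $ H^\infty $-angle of $ A $ and $ \alpha_A $, each $ g_z $ is holomorphic on $ \Sigma_\sigma $, and the angular gap $ \alpha_A - \sigma > 0 $ between $ \Sigma_\sigma $ (which contains the spectrum of $ \tau A $) and the pole — made quantitative by the choice of $ \alpha_A $ together with \cref{eq:upsilon2} — yields $ \snm{1-e^{-z}+\zeta} \geqslant c(\snm{e^{-z}-1}+\snm{\zeta}) $ for $ \zeta \in \Sigma_\sigma $. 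Since $ g_z $ decays like $ \snm{\zeta}^{1/4} $ as $ \zeta \to 0 $ and like $ \snm{\zeta}^{-1/4} $ as $ \zeta \to \infty $, a short computation gives $ \sup_{z \in \Upsilon_2} \nm{g_z}_{H^\infty(\Sigma_\sigma)} < \infty $, and the bounded $ H^\infty $-calculus of $ A $ (with the scale invariance of $ \tau A \mapsto $ its calculus constant) then delivers $ \nm{g_z(\tau A)}_{\mathcal L(L^q(\mathcal O))} \leqslant c\nm{g_z}_{H^\infty(\Sigma_\sigma)} $ uniformly in $ z $ and in $ \tau $.

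Finally I would lift this $ L^q(\mathcal O) $-bound to $ X $: since $ \Psi(z) $ acts only in the spatial variable and is the identity in the $ \ell^p $-, $ L^p(\Omega) $- and $ H $-slots, it suffices that the natural $ H $-valued extension of $ g_z(\tau A) $ be bounded on $ L^q(\mathcal O;H) $ with the same constant, which is standard for operators with bounded $ H^\infty $-calculus on $ L^q $-spaces (their $ \ell^2 $-, hence $ H $-, valued extensions remain bounded via the square-function characterization of $ \mathcal R $-boundedness in $ L^q $). Combining the three steps proves the claim. The main obstacle is the uniform $ H^\infty $-bound on $ g_z $: the essential point is that the arc $ \Upsilon_2 $ has been designed so that $ e^{-z}-1 $ stays in a sector of half-angle at least $ \alpha_A > \theta_A $ and at a positive distance from $ \overline{\Sigma_{\theta_A}} $ — precisely the content of the $ \alpha_A $-condition and \cref{eq:upsilon2} — after which everything else is elementary once type $ 2 $ is invoked.
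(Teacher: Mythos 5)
Your proof is correct, but it takes a genuinely different route from the paper's. The paper expands $\Psi(z)$ as a Dunford integral over $\partial\Sigma_{\theta_A}$, pulls the $\lambda$-integral outside the $\gamma$-norm, and uses the $\gamma$-Fubini isomorphism together with the Kahane--Khintchine inequality to reduce matters to the $L^2(\Upsilon_2,\snm{\mathrm{d}z})$-bound of the scalar symbol $(e^{-z}-1)^{-1/4}(1-e^{-z}+\tau\lambda)^{-1/2}$ (this is where \cref{eq:upsilon2} enters), after which the resolvent estimate \cref{eq:z-A-inv} and the convergent integral $\int_{\partial\Sigma_{\theta_A}}\snm{\lambda}^{1/4}(1+\snm{\lambda}^{1/2})^{-1}\,\mathrm{d}\snm{\lambda}/\snm{\lambda}$ finish the job; notably, it uses only sectoriality and the resolvent bound, not the $H^\infty$-calculus, and does not rely on type~$2$. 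You instead exploit that $\ell^p(L^p(\Omega;L^q(\mathcal O;H)))$ has type~$2$ (true here since $p,q\geqslant 2$) to dominate the $\gamma$-norm over the compact arc $\Upsilon_2$ by the Bochner $L^2$-norm, reducing everything to a uniform-in-$(z,\tau)$ bound on $\nm{\Psi(z)}_{\mathcal L(X)}$, which you obtain from the bounded $H^\infty$-calculus via the dilation invariance of $\Sigma_\sigma$ and the angular separation $\snm{\operatorname{Arg}(e^{-z}-1)}\geqslant\alpha_A>\sigma$. This is arguably more transparent, but it leans on two extra structural inputs (type~$2$ and the $H^\infty$-calculus) that the paper's argument avoids for this lemma, so it would not extend to $q<2$, whereas the paper's contour argument in principle would. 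Two cosmetic points: the lower bound $\snm{1-e^{-z}+\zeta}\geqslant c(\snm{e^{-z}-1}+\snm{\zeta})$ for $\zeta\in\Sigma_\sigma$ follows from the angular condition alone (the separation estimate \cref{eq:upsilon2} concerns the contour $\partial\Sigma_{\theta_A}$ and is not the relevant ingredient here), and you should note explicitly that $g_z(\zeta)=\zeta^{1/4}(1-e^{-z}+\zeta)^{-1/2}$ decays polynomially at $0$ and $\infty$, hence lies in $\mathcal H_0^\infty(\Sigma_\sigma)$, so the calculus applies directly; your treatment of the Hilbert-valued (Marcinkiewicz--Zygmund) lifting is appropriate and is in fact needed implicitly in the paper's proof as well.
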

\begin{proof}
For brevity, we denote the space $ \ell^p(L^p(\Omega;L^q(\mathcal O;H))) $ by $ X $. Given any $ z \in \Upsilon_2 $, the inequality \cref{eq:upsilon2} implies that
\[
  \left( \cdot \right)^{1/4} \left( 1 - e^{-z} + \tau \cdot \right)^{-1/2}
    \in \mathcal H_0^\infty(\Sigma_{\theta_A}).
\]
According to the theory of the Dunford functional calculus (see, e.g., \cite[Chapter~10]{HytonenWeis2017}), we have
\[
    A^{1/4}(1 - e^{-z} + \tau A)^{-1/2} = \frac{1}{2\pi i}
    \int_{\partial\Sigma_{\theta_A}} \lambda^{1/4}(1 - e^{-z} + \tau \lambda)^{-1/2}
    (\lambda - A)^{-1} \, \mathrm{d}\lambda.
\]
Substituting this expression into \cref{eq:varphi}, we obtain for any $ z \in \Upsilon_2 $:
\begin{align*}
    \Psi(z) &= \frac{1}{(2\pi i)^{3/2}} \int_{\partial\Sigma_{\theta_A}}
    (e^{-z} - 1)^{-1/4} (\tau \lambda)^{1/4} (1 - e^{-z} + \tau \lambda)^{-1/2}
    (\lambda - A)^{-1} \, \mathrm{d}\lambda \\
            &= \int_{\partial\Sigma_{\theta_A}}
            G(z,\lambda) (\tau \lambda)^{1/4}
            (\lambda-A)^{-1} \, \mathrm{d}\lambda,
\end{align*}
where 
\[
    G(z,\lambda) := (2\pi i)^{-3/2}
    (e^{-z}-1)^{-1/4}(1-e^{-z}+\tau\lambda)^{-1/2},
    \quad z \in \Upsilon_2, \, \lambda \in \partial\Sigma_{\theta_A}.
\]
Consequently,
\begin{align*}
    \norm{ \Psi f }_{\gamma(L^2(\Upsilon_2,\snm{\mathrm{d}z}), \, X)}
    &\leqslant \int_{\partial\Sigma_{\theta_A}}
    \nmB{
      G(z,\lambda) (\tau \lambda)^{1/4} 
      (\lambda - A)^{-1} f
    }_{\gamma(L^2(\Upsilon_2,\snm{\mathrm{d}z}), \, X)}
    \, \snm{\mathrm{d}\lambda}.
\end{align*}
Utilizing the $ \gamma $-Fubini isomorphism (\cite[Theorem 9.4.8]{HytonenWeis2017}) and the
isometrical equivalence between $\gamma(L^2(\Upsilon_2,\snm{\mathrm{d}z});H)$ and $ L^2(\Upsilon_2,\snm{\mathrm{d}z};H)$ (\cite[Proposition~9.2.9]{HytonenWeis2017}), we deduce that
\begin{align*}
    \norm{ \Psi f }_{\gamma(L^2(\Upsilon_2,\snm{\mathrm{d}z}), \, X)}
    &\leqslant c \int_{\partial\Sigma_{\theta_A}}
    \snmb{\tau \lambda}^{1/4} 
    \nmb{ (\lambda - A)^{-1} f }_X
    \norm{G(z,\lambda)}_{L^2(\Upsilon_2,\snm{\mathrm{d}z})}
    \, \snm{\mathrm{d}\lambda}.
\end{align*}
Since \cref{eq:upsilon2} ensures that
\[
    \nmb{ G(z,\lambda) }_{L^2(\Upsilon_2,\snm{\mathrm{d}z})} \leqslant \frac{c}{ 1 + \snm{\tau \lambda}^{1/2} }
    \quad \text{for all } \lambda \in \partial\Sigma_{\theta_A},
\]
we conclude that
\begin{align*}
    \norm{ \Psi f }_{\gamma(L^2(\Upsilon_2, \snm{\mathrm{d}z}), \, X)}
    &\leqslant c \int_{\partial\Sigma_{\theta_A}}
    \frac{\snm{\tau \lambda}^{1/4}}{1+\snm{\tau\lambda}^{1/2}}
    \norm{(\lambda - A)^{-1}f}_{X} \, \snm{\mathrm{d}\lambda} \\
    &\leqslant c \int_{\partial\Sigma_{\theta_A}}
    \frac{\snm{\tau\lambda}^{1/4}}{1+\snm{\tau\lambda}^{1/2}}
    \, \frac{\snm{\mathrm{d}\lambda}}{\snm{\lambda}} \norm{f}_X \quad\text{(by \cref{eq:z-A-inv})} \\
    &= c \int_{\partial\Sigma_{\theta_A}}
    \frac{\snm{\eta}^{1/4}}{1+\snm{\eta}^{1/2}}
    \, \frac{\snm{\mathrm{d}\eta}}{\snm{\eta}} \, \norm{f}_X,
\end{align*}
where we have made the change of variable $\eta := \tau\lambda$. Given the convergence of the integral over $\partial\Sigma_{\theta_A}$, we can confirm the validity of the desired estimate \eqref{eq:varphi-esti}, thus completing the proof.
\end{proof}

Similarly, we have the following square function estimate for $ \Psi^* $.
\begin{lemma}
  \label{lem:varphi*}
  For any $ g \in \ell^{p'}(L^{p'}(\Omega;L^{q'}(\mathcal O))) $, we have
  \begin{equation}
    \norm{\Psi^* g}_{
      \gamma(L^2(\Upsilon_2,\snm{\mathrm{d}z}), \, \ell^{p'}(L^{p'}(\Omega;L^{q'}(\mathcal O))))
    } \leqslant c \norm{g}_{\ell^{p'}(L^{p'}(\Omega;L^{q'}(\mathcal O)))}.
  \end{equation}
\end{lemma}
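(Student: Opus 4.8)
The plan is to mirror the proof of \cref{lem:varphi} essentially verbatim, replacing $ A $ by its dual $ A^* $, the variable $ z $ by $ \bar z $, and the exponents $ (p,q) $ by $ (p',q') $; since $ \Psi^* $ in \cref{eq:varphi*} is structurally identical to $ \Psi $ in \cref{eq:varphi}, the whole argument transfers once I verify that every analytic ingredient used for $ A $ has a counterpart for $ A^* $. For brevity write $ E := \ell^{p'}(L^{p'}(\Omega;L^{q'}(\mathcal O))) $.

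First I would insert the functional-calculus representation
\[
  (A^*)^{1/4}(1-e^{-\bar z}+\tau A^*)^{-1/2} = \frac1{2\pi i}
  \int_{\partial\Sigma_{\theta_A}} \lambda^{1/4}(1 - e^{-\bar z} + \tau \lambda)^{-1/2}
  (\lambda - A^*)^{-1} \, \mathrm{d}\lambda
\]
into \cref{eq:varphi*}. This is legitimate because, as recorded at the beginning of this subsection, $ A^* $ is sectorial on $ L^{q'}(\mathcal O) $ and admits a bounded $ H^\infty $-calculus of angle less than $ \theta_A $ (see \cite[Proposition~10.2.20]{HytonenWeis2017}), exactly as $ A $ does.

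Next I would run the same chain of estimates: pull the $ \lambda $-integral outside the $ \gamma(L^2(\Upsilon_2,\snm{\mathrm{d}z}),E) $-norm by the triangle inequality, then factor each integrand into a scalar function of $ z $ times the fixed vector $ (\lambda-A^*)^{-1}g \in E $, using the $ \gamma $-Fubini isomorphism (\cite[Theorem~9.4.8]{HytonenWeis2017}) together with the Kahane--Khintchine inequality (\cite[Theorem~6.2.4]{HytonenWeis2017}) to identify the $ \gamma $-norm of such a rank-one element with $ \nm{(e^{-\bar z}-1)^{-1/4}(1-e^{-\bar z}+\tau\lambda)^{-1/2}}_{L^2(\Upsilon_2,\snm{\mathrm{d}z})}\,\snm{\tau\lambda}^{1/4}\,\nm{(\lambda-A^*)^{-1}g}_E $. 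The key scalar bound
\[
  \nmB{(e^{-\bar z}-1)^{-1/4}(1-e^{-\bar z}+\tau\lambda)^{-1/2}}_{L^2(\Upsilon_2,\snm{\mathrm{d}z})} \leqslant \frac{c}{1+\snm{\tau\lambda}^{1/2}}
\]
then follows from \cref{eq:upsilon2}: because $ \Upsilon_2 $ is symmetric under complex conjugation, as $ z $ ranges over $ \Upsilon_2 $ so does $ \bar z $, and \cref{eq:upsilon2} involves only moduli of differences, so the estimate is identical to the one in \cref{lem:varphi}. Finally, the resolvent decay of $ A^* $ needed to close the $ \lambda $-integral coincides with \cref{eq:z-A-inv}, since the Banach-space transpose gives $ (\lambda-A^*)^{-1}=((\lambda-A)^{-1})^* $ and hence $ \nm{(\lambda-A^*)^{-1}}_{\mathcal L(L^{q'}(\mathcal O))}=\nm{(\lambda-A)^{-1}}_{\mathcal L(L^q(\mathcal O))} $; the remaining scalar integral $ \int_{\partial\Sigma_{\theta_A}}\frac{\snm{\lambda}^{1/4}}{1+\snm{\lambda}^{1/2}}\frac{\mathrm{d}\snm{\lambda}}{\snm{\lambda}} $ converges as before, yielding the claimed bound by $ c\nm{g}_E $.

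I do not expect a genuinely new obstacle; the only points requiring care are the three transferrals just listed --- the $ H^\infty $-calculus of $ A^* $ (cited), the resolvent bound for $ A^* $ (obtained by duality from \cref{eq:z-A-inv}), and the passage from $ z $ to $ \bar z $ in the $ L^2(\Upsilon_2) $ estimate (handled by the conjugation symmetry of $ \Upsilon_2 $). Once these are in place, the computation is line-for-line the dual of the proof of \cref{lem:varphi}.
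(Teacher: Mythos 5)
Your proposal is correct and matches the paper's intent exactly: the paper gives no separate proof of \cref{lem:varphi*} but only remarks that it is ``similar to that of \cref{lem:varphi}'', and your dualized run-through --- with the three transfer points ($H^\infty$-calculus of $A^*$, the resolvent bound by transposition, and the conjugation symmetry of $\Upsilon_2$) explicitly verified --- is precisely that argument carried out.
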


Thirdly, let us introduce a representation formula of the solution to \cref{eq:Y-def}.
For any $ r \geqslant \tau/r_A $, define
\begin{align}
  \mathscr I_{+}(r) &:=
  \mathcal I\left( -\log\left(1 + \frac{\tau e^{i\alpha_A}}r\right) \right), 
  \label{eq:scrI+} \\
  \mathscr I_{-}(r) &:=
  \mathcal I\left( -\log\left(1 + \frac{\tau e^{-i\alpha_A}}r \right) \right).
  \label{eq:scrI-}
\end{align}
\begin{lemma}
  Let $ Y $ be the solution to the discretization \cref{eq:Y-def} with
  \[
    f \in \ell_{\mathbb F}^p(L^p(\Omega;L^q(\mathcal O;H))).
  \]
  Then it holds $\mathbb P $-almost surely that
  \begin{small}
  \begin{equation}
    \label{eq:A12Y}
    A^{1/2} Y = \int_{\tau/r_A}^\infty \Big(
      \frac{
        \varphi_{+}(rA)^2 \mathscr I_{+}(r)
      }{
        r+\tau e^{i\alpha_A}
      } + \frac{
        \varphi_{-}(rA)^2 \mathscr I_{-}(r) 
      }{
        r+\tau e^{-i\alpha_A}
      }
    \Big) f \, \mathrm{d}r + \int_{\Upsilon_2}
    \Psi(z)^2 \mathcal I(z) f \, \mathrm{d}z.
  \end{equation}
  \end{small}
\end{lemma}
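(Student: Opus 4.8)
The plan is to solve the recursion \cref{eq:Y-def} in closed form and then recognise the resulting operator series as a contour integral built out of $ \Psi $ and $ \mathcal I(z) $. Since $ (1+\tau A)Y_{j+1} = Y_j + f_j\delta W_j $ and $ Y_0 = 0 $, induction gives $ Y_j = \sum_{k=0}^{j-1}(1+\tau A)^{-(j-k)} f_k\delta W_k $ for $ j\geqslant 1 $, so that $ A^{1/2}Y_j = \sum_{k=0}^{j-1} A^{1/2}(1+\tau A)^{-(j-k)} f_k \delta W_k $. Everything therefore reduces to a usable representation of the single operator $ A^{1/2}(1+\tau A)^{-n} $, $ n\geqslant 1 $. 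As in the proof of \cref{thm:time-regu}, it suffices to argue for $ f $ with each $ f_j $ a finite sum $ \sum_{m,n} g_{jmn}h_n $; the general case follows by density, all operators involved being bounded.

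Second, I would use a generating-function/Cauchy argument. Because $ \operatorname{Re}(1+\tau\lambda)\geqslant 1 $ for $ \lambda $ in the spectral sector $ \overline{\Sigma_{\theta_A}} $, the operator $ (1+\tau A)^{-1} $ has spectral radius at most $ 1 $, so $ \sum_{n\geqslant 1}(1+\tau A)^{-n}\zeta^n = \zeta(1+\tau A-\zeta)^{-1} $ for $ \snm{\zeta}<1 $, and for any $ \rho\in(0,1) $,
\[
  A^{1/2}(1+\tau A)^{-n} = \frac1{2\pi i}\oint_{\snm{\zeta}=\rho} \zeta^{-n} A^{1/2}(1+\tau A-\zeta)^{-1}\,\mathrm{d}\zeta.
\]
Substituting $ \zeta = e^{-z} $ turns the circle into a vertical segment $ \Gamma_\rho := \{-\log\rho + is: s\in[-\pi,\pi]\} $, and the definition \cref{eq:varphi} of $ \Psi $ yields the key identity $ A^{1/2}(1+\tau A-e^{-z})^{-1} = \tau^{-1/2}(2\pi i)(e^{-z}-1)^{1/2}\Psi(z)^2 $. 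Inserting this, using $ e^{(j-k)z}e^{-z}=e^{(j-k-1)z} $, and comparing with \cref{eq:I-def}, the finite sum over $ k $ collapses exactly into $ (\mathcal I(z)f)_j $, giving $ A^{1/2}Y = -\int_{\Gamma_\rho}\Psi(z)^2\mathcal I(z)f\,\mathrm{d}z $.

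Third, I would deform $ \Gamma_\rho $ onto $ \Upsilon $. Viewed on the cylinder $ \mathbb C/2\pi i\mathbb Z $ the integrand is single-valued and $ 2\pi i $-periodic (the powers $ e^{(j-k-1)z} $ are periodic since $ j-k-1\in\mathbb Z $, and the fractional powers of $ e^{-z}-1 $ are single-valued along the contours used), and its only singularities come from $ (1-e^{-z}+\tau A)^{-1} $, i.e.\ from $ (e^{-z}-1)/\tau\in\operatorname{spec}(A)\subset\overline{\Sigma_{\theta_A}} $. The construction of $ \Upsilon $ is exactly what avoids these: the condition $ \snm{\operatorname{Arg}(e^{-z}-1)}\geqslant\alpha_A>\theta_A $ on $ \Upsilon $ keeps $ (e^{-z}-1)/\tau $ out of the sector, and \cref{eq:upsilon2} does the same along $ \Upsilon_2 $. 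A direct computation (using $ \cos(\beta_A-\pi/2)=\sin\beta_A $) shows the two outer endpoints of $ \Upsilon_2 $ lie at $ \operatorname{Im}z=\pm\pi $, so $ \Upsilon $ closes up on the cylinder and is homotopic to $ \Gamma_\rho $ through the singularity-free region between them; Cauchy's theorem then transports the integral to $ \Upsilon $ with a suitable orientation.

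Finally, split $ \Upsilon=\Upsilon_1\cup\Upsilon_2 $. On the two branches of $ \Upsilon_1 $ I would change variables by $ e^{-z}-1=\tau e^{\pm i\alpha_A}/r $ (equivalently $ z = -\log(\tau e^{\pm i\alpha_A}/r+1) $, $ r\in[\tau/r_A,\infty) $); computing $ \mathrm{d}z = \tau e^{\pm i\alpha_A}\big(r(r+\tau e^{\pm i\alpha_A})\big)^{-1}\mathrm{d}r $ and invoking the definitions \cref{eq:varphi-,eq:varphi+} of $ \varphi_{\pm} $ verifies $ \Psi(z)^2\,\mathrm{d}z = \varphi_{\pm}(rA)^2(r+\tau e^{\pm i\alpha_A})^{-1}\mathrm{d}r $, while by \cref{eq:scrI-,eq:scrI+} one has $ \mathcal I(z)=\mathscr I_{\pm}(r) $; the $ \Upsilon_2 $-piece is kept as is. Matching orientations (the reversal $ r\mapsto\tau/\tilde r $ absorbs the leading sign) then produces precisely \cref{eq:A12Y}. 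The main obstacle is the third step: establishing holomorphy and periodicity of the operator-valued integrand and that $ \Upsilon $ never meets the spectral singularity set, together with the integrable singularity at $ z=0 $, where the factors $ (e^{-z}-1)^{-1/2} $ in $ \Psi^2 $ and $ (e^{-z}-1)^{1/2} $ in $ \mathcal I $ cancel to leave $ A^{1/2}(1-e^{-z}+\tau A)^{-1}=O(\snm{1-e^{-z}}^{-1/2}) $, which is integrable near $ z=0 $.
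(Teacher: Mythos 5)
Your proposal is correct and follows essentially the same route as the paper: the explicit solution of the recursion combined with the Cauchy coefficient formula for $(1+\tau A)^{-n}$ is exactly the ``discrete Laplace transform'' representation the paper starts from, and the subsequent contour deformation to $\Upsilon$, the identification of $\Psi(z)^2$ and $\mathcal I(z)$, and the change of variables $e^{-z}-1=\tau e^{\pm i\alpha_A}/r$ on $\Upsilon_1$ coincide with the paper's argument. Your additional remarks on periodicity, the closing of $\Upsilon$ at $\operatorname{Im}z=\pm\pi$, and the integrable $O(\snm{z}^{-1/2})$ singularity at $z=0$ are accurate and in fact make explicit some details the paper leaves implicit.
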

\begin{proof}
  For the sake of brevity, we adopt the convention that any equality or inequality
  involving \( f \) is understood to hold $ \mathbb P$-almost surely
  throughout this proof. The argument is divided into four steps.

  \textbf{Step 1.} For any $ \epsilon \in (0,1) $, define
  \[
    A_\epsilon := (\epsilon + A)(1 + \epsilon A)^{-1}.
  \]
  According to \cite[Propositions~3.1.4 and 3.1.9]{Pruss2016},
  the operator \( A_\epsilon \) possesses the following properties:
  \begin{enumerate}
    \item[(a)] \( A_\epsilon \) has a bounded inverse for all \( \epsilon \in (0,1) \);
    \item[(b)] The norm
      \[
        \| z(z - A_\epsilon)^{-1} \|_{\mathcal{L}(L^q(\mathcal{O}))}
      \]
      is uniformly bounded with respect to \( \epsilon \in (0,1) \) and \( z \in \mathbb{C} \setminus \{0\} \) satisfying
      \( \snm{\operatorname{Arg} z} \geqslant \theta_A \);
    \item[(c)] For any \( m \in \mathbb{N}_{>0} \) and for any \( z \in \mathbb{C} \setminus \{0\} \)
      with \( \snm{\operatorname{Arg} z} \geqslant \theta_A \), the operator \( A_\epsilon^{1/2}(z - A_\epsilon)^{-m} \) converges to \( A^{1/2}(z - A)^{-m} \) in \( \mathcal{L}(L^q(\mathcal{O})) \) as \( \epsilon \to {0+} \).
  \end{enumerate}
  For any \( \epsilon \in (0,1) \) and
  \( z \in \mathbb C \setminus \{0\} \) with
  \( \snm{\operatorname{Arg}z} > \theta_A \), we have
  \begin{align*}
    & \norm{z^{1/2}A_\epsilon^{1/2}(z-A_\epsilon)^{-1}}_{\mathcal L(L^q(\mathcal O))} \\
    ={} 
    & \nmB{
      \frac1{2\pi i} \int_{\partial\Sigma_{\theta_A}}
      z^{1/2} \lambda^{1/2} (z-\lambda)^{-1} (\lambda - A_\epsilon)^{-1} \, \mathrm{d}\lambda
    }_{\mathcal L(L^q(\mathcal O))} \\
    \leqslant{}
    & \frac{1}{2\pi} \int_{\partial\Sigma_{\theta_A}}
    \snm{z}^{1/2} \snm{\lambda}^{-1/2} \snm{z-\lambda}^{-1}
    \norm{\lambda(\lambda-A_\epsilon)^{-1}}_{\mathcal L(L^q(\mathcal O))}
    \, \snm{\mathrm{d}\lambda}.
  \end{align*}
  Thus, an elementary calculation using property (b) yields the following additional property:
  \begin{enumerate}
    \item[(d)] 
      For any given $ \theta \in (\theta_A,\pi) $,
      the norm $ \norm{z^{1/2}A_\epsilon^{1/2}(z-A_\epsilon)^{-1}}_{\mathcal L(L^q(\mathcal O))} $
      is uniformly bounded with respect to \( \epsilon \in (0,1) \) and
      $ z \in \mathbb C \setminus\{0\} $ with $ \snm{\operatorname{Arg}z} \geqslant \theta $.
  \end{enumerate}

  \textbf{Step 2.} For any $ \epsilon \in (0,1) $, define $ (Y_{j,\epsilon})_{j\in\mathbb N} $ by
  \[
    \begin{cases}
      Y_{j+1,\epsilon} - Y_{j,\epsilon} +
      \tau A_\epsilon Y_{j+1,\epsilon} = f_j \delta W_j,
      \quad j \in \mathbb N, \\
      Y_{0,\epsilon} = 0.
    \end{cases}
  \]
  By definition we have, for any $ j \geqslant 1 $,
  \begin{align*}
    A^{1/2}Y_j &= \sum_{k=0}^{j-1} A^{1/2}(I + \tau A)^{k-j} f_k \delta W_k, \\
    A_\epsilon^{1/2}Y_{j,\epsilon} &=
    \sum_{k=0}^{j-1} A_{\epsilon}^{1/2}(I + \tau A_\epsilon)^{k-j} f_k \delta W_k.
  \end{align*}
  Hence, using property (c) from Step 1 gives
  \begin{equation}
    \label{eq:Yeps-Y}
    \lim_{\epsilon \to {0+}}
    \norm{A_\epsilon^{1/2} Y_{j,\epsilon} - A^{1/2}Y_j}_{L^q(\mathcal O)}
    = 0, \quad \forall j \geqslant 1.
  \end{equation}


  \textbf{Step 3.} We now proceed to demonstrate the decomposition
\begin{equation}
  \label{eq:Aeps12Y}
  A_\epsilon^{1/2}Y_{j,\epsilon} = I_{j,\epsilon}^{(1)} + I_{j,\epsilon}^{(2)} + I_{j,\epsilon}^{(3)},
  \quad \forall j \geqslant 1, \, \forall \epsilon \in (0,1),
\end{equation}
where
\begin{align*}
  I_{j,\epsilon}^{(1)} &:= \frac{1}{2\pi i} \int_{\Upsilon_1 \cap \{z \in \mathbb{C} \mid \, \operatorname{Im}z < 0\}} A_\epsilon^{1/2}\left(1-e^{-z}+\tau A_\epsilon\right)^{-1} \eta_j(z) \, \mathrm{d}z, \\
  I_{j,\epsilon}^{(2)} &:= \frac{1}{2\pi i} \int_{\Upsilon_1 \cap \{z \in \mathbb{C} \mid \, \operatorname{Im}z > 0\}} A_\epsilon^{1/2}\left(1-e^{-z}+\tau A_\epsilon\right)^{-1} \eta_j(z) \, \mathrm{d}z, \\
  I_{j,\epsilon}^{(3)} &:= \frac{1}{2\pi i} \int_{\Upsilon_2} A_\epsilon^{1/2}\left(1-e^{-z}+\tau A_\epsilon\right)^{-1} \eta_j(z) \, \mathrm{d}z,
\end{align*}
and \(\eta_j(z) := \sum_{k=0}^{j-1} e^{(j-k-1)z} f_k \delta W_k\).
To establish this, we fix any \(\epsilon \in (0,1)\) and \(j \geqslant 1\). Employing the standard discrete Laplace transform method, we obtain
\begin{align*}
  Y_{j,\epsilon} &= \frac{1}{2\pi i} \int_{(1-i\pi, 1+i\pi)} e^{(j-1)z} (1-e^{-z} + \tau A_\epsilon)^{-1} \sum_{k=0}^\infty e^{-kz} f_k \delta W_k \, \mathrm{d}z \\
                 &= \frac{1}{2\pi i} \int_{(1-i\pi, 1+i\pi)} (1-e^{-z} + \tau A_\epsilon)^{-1} \sum_{k=0}^\infty e^{(j-k-1)z} f_k \delta W_k \, \mathrm{d}z.
\end{align*}
By property (b) from Step 1 and the observation that \(|\operatorname{Arg}(e^{-z}-1)| > \pi/2\)
for all \(z \in \mathbb{C}\) with \(\operatorname{Re}z \geqslant 1\),
along with the fact that \( e^{x-i\pi} = e^{x+i\pi} \) for all
\( x \in [1,\infty) \), application of Cauchy's theorem yields the following identity for all $ m \in \mathbb N_{>0} $:
\[
  \int_{(1-i\pi, 1+i\pi)} (1 - e^{-z} + \tau A_{\epsilon})^{-1} e^{-mz} \, \mathrm{d}z = 0.
\]
  This simplifies the expression for \( Y_{j,\epsilon} \) to
  \[
    Y_{j,\epsilon} = \frac{1}{2\pi i} \int_{(1-i\pi, 1+i\pi)}
    \left(1 - e^{-z} + \tau A_{\epsilon}\right)^{-1}
    \eta_j(z) \, \mathrm{d}z.
  \]
  Given that the integrand in the above integral is analytic over
  the closure of the set \( \{z \in \Sigma_{\beta_A} \mid \, \operatorname{Re} z \leqslant 1, \, -\pi \leqslant \operatorname{Im} z \leqslant \pi\} \),
  as certified by \cref{eq:lost} and properties (a) and (b) from Step 1, 
  we can invoke Cauchy's theorem to recast \( Y_{j,\epsilon} \) as
  an integral over \( \Upsilon \):
  \[
    Y_{j,\epsilon} = \frac{1}{2\pi i} \int_{\Upsilon}
    \left(1 - e^{-z} + \tau A_{\epsilon}\right)^{-1} \eta_j(z) \, \mathrm{d}z.
  \]
  Applying $ A_{\epsilon}^{1/2} $ to both sides of the above equality and partitoning
  the integral curve $ \Upsilon $ into three parts
  $ \Upsilon_1 \cap \{z \in \mathbb C \mid \operatorname{Im}z < 0\} $,
  $ \Upsilon_1 \cap \{z \in \mathbb C \mid \operatorname{Im}z > 0\} $,
  and $ \Upsilon_2 $, we immediately arrive at the desired decomposition \cref{eq:Aeps12Y}.

\textbf{Step 4.}
Let \( j \geqslant 1 \) be fixed. For \( I_{j,\epsilon}^{(1)} \), we have
\begin{align*}
    I_{j,\epsilon}^{(1)}
    &= \frac{1}{2\pi i} \int_0^{r_A} \frac{
      e^{i\alpha_A} A_\epsilon^{1/2}(-re^{i\alpha_A} + \tau A_\epsilon)^{-1}
      \eta_j\big( -\log(1+re^{i\alpha_A}) \big)
    }{1 + re^{i\alpha_A}}
    \, \mathrm{d}r  \\
    &= \frac{1}{2\pi i} \int_{\tau/r_A}^\infty
    \frac{
      e^{i\alpha_A} A_\epsilon^{1/2}
      \big( -e^{i\alpha_A} + rA_\epsilon \big)^{-1}
      \eta_j\left(-\log\left( 1+\frac{\tau e^{i\alpha_A}}{r} \right)\right)
    }{r+\tau e^{i\alpha_A}}\, \mathrm{d}r,
\end{align*}
where the first equality follows from the change of variable
\[ r = \left(e^{-z} - 1\right)e^{-i\alpha_A} \quad \text{for } z \in \Upsilon_1 \text{ with } \operatorname{Im} z < 0, \]
and the second equality is obtained by substituting \( r := \tau / r \).
Given the properties (c) and (d) established in Step 1, the condition \( \alpha_A \in (\theta_A, \pi/2) \), and the inequality
\[
  \sup_{r \geqslant \tau/r_A}\left\| \eta_j\left(-\log\left(1+\frac{\tau e^{i\alpha_A}}{r}\right)\right) \right\|_{L^q(\mathcal{O})}
  \leqslant \sum_{k=0}^{j-1} \|f_k\delta W_k\|_{L^q(\mathcal{O})},
\]
the application of Lebesgue's dominated convergence theorem leads to
\[
  \lim_{\epsilon \to 0^+} I_{j,\epsilon}^{(1)} = 
  \frac{1}{2\pi i} \int_{\tau/r_A}^{\infty}
  \frac{e^{i\alpha_A} A^{1/2}(-e^{i\alpha_A}+rA)^{-1}
  \eta_j\left(-\log\left(1+\frac{\tau e^{i\alpha_A}}{r}\right)\right)}
  {r+\tau e^{i\alpha_A}} \, \mathrm{d}r.
\]
Using \eqref{eq:varphi+}, \eqref{eq:I-def}, and \eqref{eq:scrI+}, this limit can be concisely expressed as
\[
  \lim_{\epsilon \to 0^+} I_{j,\epsilon}^{(1)} = 
  \int_{\tau/r_A}^{\infty} \frac{\varphi_{+}(rA)^2}{r+\tau e^{i\alpha_A}}
  \big( \mathscr{I}_{+}(r)f \big)_j \, \mathrm{d}r,
\]
which is also justified by \cite[Proposition~15.1.4]{HytonenWeis2023}.
Similarly, for \( I_{j,\epsilon}^{(2)} \), we deduce
\[
  \lim_{\epsilon \to 0^+} I_{j,\epsilon}^{(2)} = 
\int_{\tau/r_A}^{\infty} \frac{\varphi_{-}(rA)^2}{r+\tau e^{-i\alpha_A}}
\big(\mathscr{I}_{-}(r)f\big)_j \, \mathrm{d}r.
\]
For \( I_{j,\epsilon}^{(3)} \), using properties (c) and (d) from Step 1, the inequality \eqref{eq:lost}, and the inequality
\[
  \sup_{z \in \Upsilon_2} \|\eta_j(z)\|_{L^q(\mathcal{O})} \leqslant \sum_{k=0}^{j-1} \|f_k \delta W_k\|_{L^q(\mathcal{O})},
\]
using Lebesgue's dominated convergence theorem again gives
\[ \lim_{\epsilon \to 0^+} I_{j,\epsilon}^{(3)} = \frac{1}{2\pi i} \int_{\Upsilon_2} A^{1/2} (1 - e^{-z} + \tau A)^{-1} \eta_j(z) \, \mathrm{d}z. \]
From \cref{eq:varphi,eq:I-def}, it follows that
\[ \lim_{\epsilon \to 0^+} I_{j,\epsilon}^{(3)} = \int_{\Upsilon_2} \Psi(z)^2 \left( \mathcal{I}(z)f \right)_j \, \mathrm{d}z. \]
Combining these limits for \( I_{j,\epsilon}^{(1)} \), \( I_{j,\epsilon}^{(2)} \), and \( I_{j,\epsilon}^{(3)} \), together with the decomposition \eqref{eq:Aeps12Y}, we obtain
\begin{align*}
\lim_{\epsilon \to 0^+} A_\epsilon^{1/2} Y_{j,\epsilon}
& = \int_{\tau/r_A}^{\infty} \left(
\frac{\varphi_{+}(rA)^2}{r+\tau e^{i\alpha_A}} \big( \mathscr{I}_{+}(r)f \big)_j +
\frac{\varphi_{-}(rA)^2}{r+\tau e^{-i\alpha_A}} \big( \mathscr{I}_{-}(r)f \big)_j
\right) \, \mathrm{d}r \\
& \quad + \int_{\Upsilon_2} \Psi(z)^2 \big( \mathcal{I}(z)f \big)_j \, \mathrm{d}z.
\end{align*}
Together with \eqref{eq:Yeps-Y}, this yields
\begin{align*}
  A^{1/2} Y_j 
  &= \int_{\tau/r_A}^{\infty} \left(
    \frac{\varphi_{+}(rA)^2}{r+\tau e^{i\alpha_A}} \big( \mathscr{I}_{+}(r)f \big)_j +
    \frac{\varphi_{-}(rA)^2}{r+\tau e^{-i\alpha_A}} \big( \mathscr{I}_{-}(r)f \big)_j
  \right) \, \mathrm{d}r \\
  & \quad {}+ \int_{\Upsilon_2}
  \Psi(z)^2 \big( \mathcal{I}(z)f \big)_j \, \mathrm{d}z.
\end{align*}
Since the above equality holds for all \( j \geqslant 1 \) and trivially for \( j = 0 \),
the desired equality \eqref{eq:A12Y} is established. This completes the proof.
\end{proof}

Finally, we conclude the proof of Theorem \ref{thm:space-regu} with the following argument.
For the sake of brevity, we adopt the following notation:
\begin{align*}
  X_0 &:= \ell^p(L^p(\Omega; L^q(\mathcal O; H))), \\
  X_1 &:= \ell^p(L^p(\Omega; L^q(\mathcal O))), \\
  X_2 &:= \ell^{p'}(L^{p'}(\Omega; L^{q'}(\mathcal O))), \\
  X_3 &:= \ell^p\left(L^p\left(\Omega; L^q\left(\mathcal O; L^2\left(\mathbb R_{+}, \frac{\mathrm{d}r}{r}\right)\right)\right)\right), \\
  X_4 &:= \ell^{p'}\left(L^{p'}\left(\Omega; L^{q'}\left(\mathcal O; L^2\left(\mathbb R_{+}, \frac{\mathrm{d}r}{r}\right)\right)\right)\right).
\end{align*}
The duality pairing between $X_1$ and $X_2$ is denoted by $\langle \cdot, \cdot \rangle$.
Fix any $ g \in X_2 $. We have
\begin{align*}  
  \dual{A^{1/2}Y,g} 
  &= \int_{\tau/r_A}^\infty \dualB{
    \frac{r}{r+\tau e^{i\alpha_A}} \varphi_{+}(rA)^2 \mathscr I_{+}(r) f +
    \frac{r}{r+\tau e^{-i\alpha_A}} \varphi_{-}(rA)^2 \mathscr I_{-}(r), \, g
  } \frac{\mathrm{d}r}r \\ 
  & \qquad {} + \int_{\Upsilon_2}
  \dualB{\Psi(z)^2 \mathcal I(z) f, \, g} \, \mathrm{d}z \\
  &=
  \int_{\tau/r_A}^\infty \dualB{
    \frac{r}{r+\tau e^{i\alpha_A}} \varphi_{+}(rA) \mathscr I_{+}(r) f, \, \varphi_{+}^*(rA^*)g
  } \frac{\mathrm{d}r}r \\ 
      & \qquad {} + \int_{\tau/r_A}^\infty \dualB{
        \frac{r}{r+\tau e^{-i\alpha_A}} \varphi_{-}(rA) \mathscr I_{-}(r) f, \, \varphi_{-}^*(rA^*) g
      } \frac{\mathrm{d}r}r \\ 
      & \qquad {} + \int_{\Upsilon_2} 
      \dualB{\Psi(z) \mathcal I(z) f, \, \Psi^*(z)g} \, \mathrm{d}z.
\end{align*}
where the first equality follows from the equality \cref{eq:A12Y},
and the second equality is a consequence of the fact that \( \varphi_{\pm}^*(rA^*) \) and \( \Psi^*(z) \) are the adjoint operators
of \( \varphi_{\pm}(rA) \) and \( \Psi(z) \), respectively.
Using the commutative properties between
$ \varphi_{\pm}(rA) $ and $ \mathscr I_{\pm}(r) $,
as well as the commutativity between $ \Psi(z) $ and $ \mathcal I(z) $,
we subsequently deduce that
\begin{align*}  
  \dual{A^{1/2}Y,g} 
  ={} &
  \int_{\tau/r_A}^\infty \dualB{
    \frac{r}{r+\tau e^{i\alpha_A}} \mathscr I_{+}(r) \varphi_{+}(rA) f, \, \varphi_{+}^*(rA^*)g
  } \frac{\mathrm{d}r}r \\ 
      & \qquad {} + \int_{\tau/r_A}^\infty \dualB{
        \frac{r}{r+\tau e^{-i\alpha_A}} \mathscr I_{-}(r) \varphi_{-}(rA) f, \, \varphi_{-}^*(rA^*) g
      } \frac{\mathrm{d}r}r \\ 
      & \qquad {} + \int_{\Upsilon_2}
      \dualB{\mathcal I(z) \Psi(z) f, \, \Psi^*(z)g} \, \mathrm{d}z.
\end{align*}
Extending $ \mathcal I_{+} $ and $ \mathcal I_{-} $ to $ [0,\tau/r_A) $ by zero,
and applying H\"older's inequality along with the uniform boundedness of
$ \frac{r}{r+\tau e^{\pm i\alpha_A}} $ with respect to $ r \in [0,\infty) $,
we further obtain
\begin{align*}
  \snmB{\dual{A^{1/2}Y,g}}
  &\leqslant
  \nmB{
    \mathscr I_{+}(r) \varphi_{+}(rA)f
  }_{X_3} \norm{\varphi_{+}^*(rA^*)g}_{X_4} + \nmB{
    \mathscr I_{-}(r) \varphi_{-}(rA)f
  }_{X_3} \norm{\varphi_{-}^*(rA^*)g}_{X_4} \\
  & \quad{} + \norm{\mathcal I(\cdot)\Psi(\cdot)f}_{
    \ell^p(L^p(\Omega;L^q(\mathcal O;L^2(\Upsilon_2,\snm{\mathrm{d}z}))))
  } \norm{\Psi^*(\cdot)g}_{
    \ell^{p'}(L^{p'}(\Omega;L^{q'}(\mathcal O;L^2(\Upsilon_2,\snm{\mathrm{d}z}))))
  } .
\end{align*}
Then, utilizing the $\gamma$-Fubini isomorphism (\cite[Theorem~9.4.8]{HytonenWeis2017}) yields
\begin{align*}
  \snmB{\dual{A^{1/2}Y,g}} 
  &\leqslant
  c\nmB{
    \mathscr I_{+}(r) \varphi_{+}(rA)f
  }_{
    \gamma(L^2(\mathbb R_{+},\frac{\mathrm{d}r}r), X_1)
  } \norm{\varphi_{+}^*(rA^*)g}_{
    \gamma(L^2(\mathbb R_{+},\frac{\mathrm{d}r}r), X_2)
  } \\
  & \qquad {} + c\nmB{
    \mathscr I_{-}(r) \varphi_{-}(rA)f
  }_{
    \gamma(L^2(\mathbb R_{+},\frac{\mathrm{d}r}r),  X_1)
  } \norm{\varphi_{-}^*(rA^*)g}_{
    \gamma(L^2(\mathbb R_{+},\frac{\mathrm{d}r}r), X_2)
  } \\
  & \qquad\ {} + c\norm{\mathcal I(\cdot)\Psi(\cdot)f}_{
    \gamma(L^2(\Upsilon_2,\snm{\mathrm{d}z}),X_1)
  } \norm{\Psi^*(\cdot)g}_{
    \gamma(L^2(\Upsilon_2,\snm{\mathrm{d}z}),X_2)
  }.
\end{align*}
According to Lemma~\ref{lem:I-R-bounded}, the $\mathcal{R}$-boundedness of the set $\{\mathcal{I}(z) \mid z \in \Upsilon\}$ is independent of $\tau$. Since $\{\mathscr{I}_{+}(r) \mid r \in [0,\infty)\}$ and $\{\mathscr{I}_{-}(r) \mid r \in [0,\infty)\}$ are subsets of $\{\mathcal{I}(z) \mid z \in \Upsilon\}$, their $\mathcal{R}$-boundedness is also independent of $\tau$.
Therefore, applying the $\gamma$-Multiplier theorem (Theorem~9.5.1 in \cite{HytonenWeis2017}) and noting that $\mathcal{R}$-boundedness implies $\gamma$-boundedness (Theorem~8.1.3 in \cite{HytonenWeis2017}), we derive from the previous inequality that
\begin{align*}
  \left| \langle A^{1/2}Y, g \rangle \right|
  &\leqslant
  c \left\| \varphi_{+}(rA)f \right\|_{\gamma(L^2(\mathbb{R}_+,\frac{\mathrm{d}r}{r}), X_0)} \left\| \varphi_{+}^*(rA^*)g \right\|_{\gamma(L^2(\mathbb{R}_+,\frac{\mathrm{d}r}{r}), X_2)} \\
  &\quad +
  c \left\| \varphi_{-}(rA)f \right\|_{\gamma(L^2(\mathbb{R}_+,\frac{\mathrm{d}r}{r}), X_0)} \left\| \varphi_{-}^*(rA^*)g \right\|_{\gamma(L^2(\mathbb{R}_+,\frac{\mathrm{d}r}{r}), X_2)} \\
  &\quad +
  c \left\| \Psi(\cdot)f \right\|_{\gamma(L^2(\Upsilon_2,|\mathrm{d}z|), X_0)} \left\| \Psi^*(\cdot)g \right\|_{\gamma(L^2(\Upsilon_2,|\mathrm{d}z|), X_2)}.
\end{align*}
This inequality, combined with the square function estimates in \cref{lem:varphi+-,lem:varphi+-*,lem:varphi,lem:varphi*}, leads to
\begin{align*}
  \snmB{ \dual{A^{1/2}Y,g} } 
  \leqslant c  \norm{f}_{X_0} \norm{g}_{X_2}.
\end{align*}
Since \( X_2 \) is the dual of \( X_1 \) and \( g \) is arbitrarily chosen
from \( X_2 \), the desired estimate \eqref{eq:space-regu} is established,
thereby completing the proof of Theorem \ref{thm:space-regu}.

\begin{remark}
  \label{rem:DSMLp}
  Let \( p \in (2,\infty) \) and \( q \in [2,\infty) \). Denote by \( L_\mathbb{F}^p(\Omega \times \mathbb{R}_+; L^q(\mathcal{O}; H)) \) the space of all \( \mathbb{F} \)-adapted \( L^q(\mathcal{O}; H) \)-valued processes that belong to \( L^p(\Omega \times \mathbb{R}_+; L^q(\mathcal{O}; H)) \).
  We redefine \( \{\mathcal{I}(z) \mid z \in \Upsilon\} \) as a family
  of operators acting from \( L_\mathbb{F}^p(\Omega \times \mathbb{R}_+; L^q(\mathcal{O}; H)) \) to \( \ell^p(L^p(\Omega; L^q(\mathcal{O}))) \) as follows: for any \( z \in \Upsilon \) and \( g \in L_\mathbb{F}^p(\Omega \times \mathbb{R}_+; L^q(\mathcal{O}; H)) \), \( \mathcal{I}(z)g \) is given by
  \begin{numcases}{}
      (\mathcal{I}(z)g)_0 = 0, \notag \\
      (\mathcal{I}(z)g)_j = \frac1{\sqrt\tau}\sum_{k=0}^{j-1} (e^{-z} - 1)^{1/2} e^{(j-k-1)z} \int_{k\tau}^{k\tau + \tau} g(t) \, \mathrm{d}W(t), \quad j \geqslant 1. \notag
    \end{numcases}
  By slightly modifying the proof of Lemma \ref{lem:I-R-bounded}, it can be shown that the
  \( \mathcal{R} \)-boundedness of \( \{\mathcal{I}(z) \mid z \in \Upsilon\} \) is bounded
  by \( c\tau^{-1/p} \), where \( c \) is a constant independent of \( \tau \).
  Consequently, following the proof of Theorem \ref{thm:space-regu}, we deduce the following form
  of discrete stochastic maximal $ L^p $-regularity estimate:
  \[
    \left[ \mathbb{E} \sum_{j=1}^\infty \tau \norm{A^{1/2} Y_j}_{L^q(\mathcal{O})}^p \right]^{1/p}
    \leqslant c \norm{g}_{L^p(\Omega \times \mathbb{R}_+; L^q(\mathcal{O}; H))}
  \]
  for all \( g \in L_\mathbb{F}^p(\Omega \times \mathbb{R}_+; L^q(\mathcal{O}; H)) \), where \( (Y_j)_{j=1}^\infty \) is defined by the Euler scheme:
  \[
    \begin{cases}
      Y_{j+1} - Y_j + \tau A Y_{j+1} = \int_{j\tau}^{j\tau + \tau} g(t) \, \mathrm{d}W(t), \quad j \geqslant 1, \\
      Y_0 = 0.
    \end{cases}
  \]
\end{remark}

\section{Convergence estimate}
\label{sec:convergence}
Let $ J $ be a positive integer and define the time step by $ \tau := T/J $.
For any $ p, q \in (1,\infty) $, let $ L_{\mathbb F,\tau}^p(\Omega\times(0,T);L^q(\mathcal O;H)) $
denote the space of all processes $ f: \Omega\times[0,T]\to L^q(\mathcal O;H) $
that are piecewise constant on each time interval $ [{j\tau},{j\tau+\tau}) $ and satisfy
\[
f({j\tau}) \in L^p(\Omega, \mathcal{F}_{{j\tau}}, \mathbb{P}; L^q(\mathcal{O}; H))
\]
for all $ 0 \leqslant j \leqslant J $.

The main result of this section is the following convergence estimate.
\begin{theorem}
  \label{thm:conv}
  Let $ p,q \in [2,\infty) $. Suppose that $ A $ is a densely defined sectorial operator on $ L^q(\mathcal O) $
  with a bounded inverse. Assume further that the family
  \(
  \left\{ z(z-A)^{-1} \mid z \in \mathbb C \setminus \overline{\Sigma_{\theta_A}} \right\}
  \)
  is $ \mathcal R $-bounded in $ \mathcal L(L^q(\mathcal O)) $,
  where $ \theta_A \in (0,\pi/2) $. Let $ y $ be the mild solution to
  \begin{equation} 
    \label{eq:y}
    \begin{cases}
      \mathrm{d}y(t) + Ay(t) \, \mathrm{d}t =
      f(t) \, \mathrm{d}W(t), \quad 0 \leqslant t \leqslant T, \\
      y(0) = 0,
    \end{cases}
  \end{equation}
  with \( f \in L_{\mathbb F,\tau}^p(\Omega\times(0,T);L^q(\mathcal O;H)) \).
  Define the sequence $ (Y_j)_{j=0}^J $ by
  \begin{subequations}
    \label{eq:Y-J-def}
    \begin{numcases}{}
      Y_{j+1} - Y_j + \tau A Y_{j+1} = \int_{{j\tau}}^{{j\tau+\tau}} f(t) \, \mathrm{d}W(t),
      \quad 0 \leqslant j < J, \\
      Y_0 = 0.
    \end{numcases}
  \end{subequations}
  Then, the following error estimate holds:
  \begin{equation} 
    \label{eq:conv}
    \biggl[
      \mathbb E 
      \sum_{j=0}^{J-1} \int_{{j\tau}}^{{j\tau+\tau}} \norm{y(t)-Y_j}_{ L^q(\mathcal O) }^p \, \mathrm{d}t
    \biggr]^{1/p} \leqslant
    c \tau^{1/2} \norm{f}_{L^p(\Omega\times(0,T);L^q(\mathcal O;H))},
  \end{equation}
  where $ c $ is a constant independent of the time step $ \tau $.
\end{theorem}

\begin{remark}
  Suppose $ p \in (2, \infty) $ and $ q \in [2, \infty) $. Assume that the operator $ A $ satisfies the
  hypotheses of Theorem \ref{thm:space-regu}, with the added premise that $ A $ possesses a bounded inverse.
  Let $ y $ be the mild solution of equation \ref{eq:y}
  with $ f \in L_{\mathbb{F}}^p(\Omega \times (0,T); L^q(\mathcal{O}; H)) $,
  where $ L_{\mathbb{F}}^p(\Omega \times (0,T); L^q(\mathcal{O}; H)) $ is defined as in \cref{rem:DSMLp}.
  Let $ (Y_j)_{j=0}^J $ be the solution of \cref{eq:Y-J-def}.
  We can establish the following error estimate (the detailed proof is left to the interested reader):
  \[
    \left[
      \mathbb{E} \sum_{j=0}^{J-1} \int_{{j\tau}}^{{j\tau+\tau}} \norm{y(t) - Y_j}_{L^q(\mathcal{O})}^p \, \mathrm{d}t
    \right]^{1/p} \leqslant c \tau^{\frac12 - \frac1p - \epsilon} \norm{f}_{L^p(\Omega \times (0,T); L^q(\mathcal{O}; H))},
  \]
  where \( \epsilon > 0 \) can be chosen arbitrarily small.
  Notably, the temporal convergence rate can be enhanced to \( \frac{1}{2} \) when the process \( f \) is piecewise
  constant in time, as established in Theorem \ref{thm:conv}. Such error estimates are particularly valuable for
  the numerical analysis of stochastic optimal control problems involving stochastic evolution equations.
\end{remark}

The purpose of the rest of this section is to prove the above theorem.
For ease of reference, we will consistently assume throughout the remainder of this section that
\( p, q \in [2, \infty) \), and that the operator \( A \) satisfies the conditions in Theorem \ref{thm:conv}.
As discussed in Subsection \ref{ssec:space-regu}, let \( A^* \) denote the dual operator of \( A \),
which is a sectorial operator on \( L^{q'}(\mathcal{O}) \). We denote by \( D(A^*) \) the domain of
\( A^* \), endowed with the standard graph norm defined as
\[
  \norm{v}_{D(A^*)} := \norm{A^*v}_{L^{q'}(\mathcal{O})}, \quad \forall v \in D(A^*).
\]
Furthermore, according to \cite[Proposition~8.4.1]{HytonenWeis2017}, the family of operators
\[
  \left\{ z(z - A^*)^{-1} \mid z \in \mathbb{C} \setminus \overline{\Sigma_{\theta_A}} \right\}
\]
is \( \mathcal{R} \)-bounded in \( \mathcal{L}(L^{q'}(\mathcal{O})) \). Additionally,
we introduce the vector-valued Sobolev space
\[
  {}^0H^{1,p'}(0,T;L^{q'}(\mathcal{O})) := \left\{ v : [0,T] \to L^{q'}(\mathcal{O}) \mid v' \in L^{p'}(0,T;L^{q'}(\mathcal{O})), \, v(T) = 0 \right\},
\]
where \( v' \) denotes the weak derivative of $ v $ with respect to the time variable.

\medskip\noindent\textbf{Proof of \cref{thm:conv}.}
We split the proof into the following two steps.

\textbf{Step 1.}
  Let \( v \) be an arbitrary but fixed element of
  $ {}^0H^{1,p'}(0,T; L^{q'}(\mathcal{O})) \cap L^{p'}(0,T; D(A^*)) $.
  Using standard techniques (see, e.g., \cite[Lemma~5.5]{Prato2014}), we have, almost surely,
  \[
    \int_0^T \dual{ y(t), \, -v'(t) + A^*v(t)} \, \mathrm{d}t = \int_0^T \langle v(t), \, f(t) \rangle \, \mathrm{d}W(t),
  \]
 where the duality pairing on the left-hand side is between $ L^q(\mathcal O) $ and
 $ L^{q'}(\mathcal O) $, and the pairing on the right-hand side denotes an
 $ \mathbb F $-adapted $ \gamma(H,\mathbb R) $-valued process, as detailed in Remark \ref{rem:love}.
 Furthermore, a direct computation yields, almost surely,
  \begin{align*}
    \int_0^T \langle v(t), \, f(t) \rangle \, \mathrm{d}W(t)
    &= \sum_{j=0}^{J-2} \biggl\langle \int_{{j\tau}}^{{j\tau+\tau}} f({j\tau}) \, \mathrm{d}W(t), \, v({j\tau+\tau}) \biggr\rangle \\
    &\quad - \sum_{j=0}^{J-1} \int_{{j\tau}}^{{j\tau+\tau}} \biggl\langle \int_{{j\tau}}^t f({j\tau}) \, \mathrm{d}W(s), \, v'(t) \biggr\rangle \, \mathrm{d}t.
  \end{align*}
  Hence, almost surely,
  \begin{align*}
  & \int_0^T \dual{y(t), \, -v'(t) + A^*v(t)} \, \mathrm{d}t \\
  ={}
  &\sum_{j=0}^{J-2} \biggl\langle \int_{{j\tau}}^{{j\tau+\tau}} f({j\tau}) \, \mathrm{d}W(t), \, v({j\tau+\tau}) \biggr\rangle 
  - \sum_{j=0}^{J-1} \int_{{j\tau}}^{{j\tau+\tau}} \biggl\langle \int_{{j\tau}}^t f({j\tau}) \, \mathrm{d}W(s), \, v'(t) \biggr\rangle \, \mathrm{d}t.
  \end{align*}
  For any measurable set \( C \in \mathcal{F} \), multiplying both sides of the above equality by the indicator
  function \( \mathbbm{1}_C \) for \( C \) and taking expectations, we deduce that
  \begin{equation}
    \label{eq:y-z}
    \begin{aligned}
      & \mathbb{E} \int_0^T \dual{y(t), \, -z'(t) + A^*z(t)} \, \mathrm{d}t \\
      ={}
      & \mathbb{E} \Biggl\{
        \sum_{j=0}^{J-2} \biggl\langle \int_{{j\tau}}^{{j\tau+\tau}} f({j\tau}) \, \mathrm{d}W(t), \, z({j\tau+\tau}) \biggr\rangle 
        - \sum_{j=0}^{J-1} \int_{{j\tau}}^{{j\tau+\tau}} \biggl\langle \int_{{j\tau}}^t f({j\tau}) \, \mathrm{d}W(t), \, z'(t) \biggr\rangle \, \mathrm{d}t
      \Biggr\}
    \end{aligned}
  \end{equation}
  holds for \( z = v \mathbbm{1}_C \). Since both sides of \cref{eq:y-z} act as bounded linear functionals on
  \[
    L^{p'}(\Omega; {}^0H^{1,p'}(0,T; L^{q'}(\mathcal{O})) \cap L^{p'}(0,T; D(A^*)))
  \]
  with respect to $ z $, as can be readily verified by \cref{lem:integral}, and
  considering the density of the linear span
  \[
    \text{span}\left\{
      v \mathbbm{1}_C \mid v \in {}^0H^{1,p'}(0,T; L^{q'}(\mathcal{O})) \cap L^{p'}(0,T; D(A^*)), \, C \in \mathcal{F}
    \right\}
  \]
  within this space, we can apply a density argument to conclude that \cref{eq:y-z} holds
  for all 
  \[
    z \in L^{p'}(\Omega; {}^0H^{1,p'}(0,T; L^{q'}(\mathcal{O})) \cap L^{p'}(0,T; D(A^*))).
  \]

  \textbf{Step 2.}
  Let \( g \in L^{p'}(\Omega \times (0,T); L^{q'}(\mathcal{O})) \) be given arbitrarily.
  According to Theorem 4.2 in \cite{Weis2001}, there exists a process \( z \) such that \( z \) solves the
  backward evolution equation almost surely:
  \[
    \begin{cases}
      -z'(t) + A^*z(t) = g(t), & 0 \leqslant t \leqslant T, \\
      z(T) = 0,
    \end{cases}
  \]
  and satisfies the regularity estimate
  \begin{equation} 
    \label{eq:z-regu}
    \norm{z'}_{L^{p'}(\Omega \times (0,T); L^{q'}(\mathcal{O}))} 
    + \norm{A^*z}_{L^{p'}(\Omega \times (0,T); L^{q'}(\mathcal{O}))} 
    \leqslant c \norm{g}_{L^{p'}(\Omega \times (0,T); L^{q'}(\mathcal{O}))}.
  \end{equation}
  Define the sequence \( (Z_j)_{j=0}^J \) almost surely by
  \begin{subequations}
    \label{eq:Z}
    \begin{numcases}{}
      Z_j - Z_{j+1} + \tau A^*Z_j = \int_{{j\tau}}^{{j\tau+\tau}} g(t) \, \mathrm{d}t, \quad  0 \leqslant j < J, \\
      Z_J = 0.
    \end{numcases}
  \end{subequations}
  By adopting an approach analogous to that presented in \cite[Theorem III]{Kemmochi2018},
  we can establish the following well-known inequality:
  \begin{equation}
    \label{eq:z-Z-omega}
    \biggl[
      \mathbb E\sum_{j=0}^{J-1} \tau \norm{z({j\tau}) - Z_j}_{L^{q'}(\mathcal{O})}^{p'}
    \biggr]^{1/p'} \leqslant c \tau \norm{g}_{L^{p'}(\Omega \times (0,T); L^{q'}(\mathcal{O}))}.
  \end{equation}
Next, using \cref{eq:Y-J-def} and \cref{eq:Z}, a straightforward computation gives that
\begin{align*} 
  \mathbb E\sum_{j=0}^{J-1} \int_{{j\tau}}^{{j\tau+\tau}} \dualb{ Y_j, \, g(t) } \, \mathrm{d}t
  = \mathbb E\sum_{j=0}^{J-2} \biggl\langle
    \int_{{j\tau}}^{{j\tau+\tau}} f({j\tau}) \, \mathrm{d}W(t), \, Z_{j+1}
  \biggr\rangle.
\end{align*}
Furthermore, the identity \cref{eq:y-z} implies that
\begin{align*}
  & \mathbb E \int_0^T \langle y(t), \, g(t) \rangle \, \mathrm{d}t \\
  ={}
  & \mathbb E \Bigg\{
    \sum_{j=0}^{J-2} \biggl\langle \int_{{j\tau}}^{{j\tau+\tau}}f({j\tau})\,\mathrm{d}W(t), \, z({j\tau+\tau}) \biggr\rangle 
    - \sum_{j=0}^{J-1} \int_{{j\tau}}^{{j\tau+\tau}} 
    \biggl\langle \int_{{j\tau}}^t f({j\tau}) \, \mathrm{d}W(t), \, z'(t) \biggr\rangle  \, \mathrm{d}t
  \Bigg\}.
\end{align*}
Combining these equalities, we derive the following identity:
\begin{align*}
  \mathbb{E}\sum_{j=0}^{J-1} \int_{{j\tau}}^{{j\tau+\tau}} \langle y(t) - Y_j, g(t) \rangle \, \mathrm{d}t 
  &= \mathbb{E}\sum_{j=0}^{J-2} \biggl\langle \int_{{j\tau}}^{{j\tau+\tau}} f({j\tau}) \, \mathrm{d}W(t),
    \, z({j\tau+\tau}) - Z_{j+1}
  \biggr\rangle \\
  & \qquad {} - \mathbb{E}\sum_{j=0}^{J-1} \int_{{j\tau}}^{{j\tau+\tau}}
  \biggl\langle \int_{{j\tau}}^t f({j\tau})\,\mathrm{d}W(s), \, z'(t) \biggr\rangle \, \mathrm{d}t \\
  &=: I_1 + I_2.
\end{align*}
  For \( I_1 \), we have
  \begin{align*}
    I_1
    & \leqslant \left(
      \sum_{j=0}^{J-2} \left\| \int_{j\tau}^{j\tau+\tau} f(j\tau) \, \mathrm{d}W(t) \right\|_{L^p(\Omega; L^q(\mathcal O))}^p
    \right)^{\frac{1}{p}} \left(
      \sum_{j=0}^{J-2} \left\| z(j\tau+\tau) - Z_{j+1} \right\|_{
        L^{p'}(\Omega; L^{q'}(\mathcal O))
      }^{p'}
    \right)^{\frac{1}{p'}} \\
   & \leqslant c \tau^{\frac{1}{p}} \left(
     \sum_{j=0}^{J-2} \left\| \int_{j\tau}^{j\tau+\tau}f(j\tau) \, \mathrm{d}W(t) \right\|_{L^p(\Omega; L^q(\mathcal O))}^p
   \right)^{\frac{1}{p}} \left\| g \right\|_{L^{p'}(\Omega\times(0,T); L^{q'}(\mathcal O))}
   \quad \text{(by \cref{eq:z-Z-omega})} \\
   & \leqslant c \tau^{\frac{1}{p}} \left(
     \sum_{j=0}^{J-2} \tau^{\frac{p}2} \left\| f(j\tau) \right\|_{L^p(\Omega; L^q(\mathcal O;H))}^p
   \right)^{\frac{1}{p}} \left\| g \right\|_{L^{p'}(\Omega\times(0,T); L^{q'}(\mathcal O))}
   \quad \text{(by \cref{lem:integral})} \\
   &= c\tau^{\frac12} \norm{f}_{L^p(\Omega\times(0,T);L^q(\mathcal O;H))}
   \norm{g}_{L^{p'}(\Omega\times(0,T);L^{q'}(\mathcal O))}.
\end{align*}
Similarly, for \( I_2 \), using Hölder's inequality, the regularity estimate \cref{eq:z-regu},
and \cref{lem:integral}, we deduce that
\[
  I_2 \leqslant c \tau^{1/2}
  \left\| f \right\|_{L^p(\Omega\times(0,T); L^q(\mathcal{O}; H))}
  \left\| g \right\|_{L^{p'}(\Omega\times(0,T); L^{q'}(\mathcal{O}))}.
\]
By combining these bounds, we obtain
\begin{align*}
  \mathbb{E} \sum_{j=0}^{J-1} \int_{{j\tau}}^{{j\tau+\tau}} \langle y(t) - Y_j, \, g \rangle \, \mathrm{d}t 
  \leqslant c \tau^{1/2} \left\| f \right\|_{L^p(\Omega\times(0,T); L^q(\mathcal{O};H))}
  \left\| g \right\|_{L^{p'}(\Omega\times(0,T); L^{q'}(\mathcal{O}))}.
\end{align*}
Given that \( g \in L^{p'}(\Omega\times(0,T); L^{q'}(\mathcal{O})) \) is arbitrary,
we can invoke the principle of duality to directly achieve the desired error estimate \cref{eq:conv}.
This concludes the proof of \cref{thm:conv}.

\hfill\ensuremath{\blacksquare}

\begin{remark}
  \label{rem:love}
  In Step 1 of the preceding proof, the stochastic integral \( \int_0^T \langle v(t), f(t) \rangle \, \mathrm{d}W(t) \) is
  interpreted as follows. By virtue of the \( \gamma \)-Fubini isomorphism
  (cf. \cite[Theorem~9.4.8]{HytonenWeis2017}), it is established that \( f \) is an \( \mathbb{F} \)-adapted
  \( \gamma(H, L^q(\mathcal{O})) \)-valued process. Furthermore, for each \( t \in [0,T] \), \( v(t) \) can
  be viewed as a bounded linear functional from \( L^q(\mathcal{O}) \) to \( \mathbb{R} \). Consequently,
  by applying the ideal property of \( \gamma \)-radonifying operators (see \cite[Theorem~9.1.10]{HytonenWeis2017}),
  \( \langle v(\cdot), f(\cdot) \rangle \) is identified as an \( \mathbb{F} \)-adapted \( \gamma(H, \mathbb{R}) \)-valued process. This identification comes with the estimate
  \[
    \norm{\langle v(t), f(t) \rangle}_{\gamma(H, \mathbb{R})} \leqslant \norm{v(t)}_{L^{q'}(\mathcal{O})} \cdot \norm{f(t)}_{\gamma(H, L^q(\mathcal{O}))}
    \leqslant c \norm{v(t)}_{L^{q'}(\mathcal O)}
    \cdot\norm{f(t)}_{L^q(\mathcal O;H)}, \quad t \in [0,T].
  \]
  Therefore, the stochastic integral \( \int_0^T \langle v(t), f(t) \rangle \, \mathrm{d}W(t) \) is understood
  as the stochastic integral of an \( \mathbb{F} \)-adapted \( \gamma(H, \mathbb{R}) \)-valued process.
\end{remark}

\begin{remark}
To provide further insight into the derivation of \cref{eq:z-Z-omega}, we start by noting that, almost surely,
\begin{align*}
\big(z(j\tau) - Z_j\big) - \big(z(j\tau+\tau) - Z_{j+1}\big) + \tau A^* \big(z(j\tau) - Z_j\big)
= A^*\int_{j\tau}^{j\tau + \tau} z(j\tau) - z(t) \, \mathrm{d}t.
\end{align*}
Utilizing the deterministic discrete maximal $L^p$-regularity estimate (see \cite[Theorem~3.2]{Kemmochi2016}),
we obtain
\begin{align*}
\left[
\mathbb{E}\sum_{j=0}^{J-1} \tau \norm{z(j\tau) - Z_j}_{L^{q'}(\mathcal{O})}^{p'}
\right]^{1/p'}
\leqslant c \left[
\mathbb{E} \sum_{j=0}^{J-1} \int_{j\tau}^{j\tau + \tau}
\norm{z(t) - z(j\tau)}_{L^{q'}(\mathcal{O})}^{p'} \, \mathrm{d}t
\right]^{1/p'}.
\end{align*}
The desired inequality \cref{eq:z-Z-omega} then follows from the standard estimate:
\[
\left[
\mathbb{E} \sum_{j=0}^{J-1} \int_{j\tau}^{j\tau + \tau}
\norm{z(t) - z(j\tau)}_{L^{q'}(\mathcal{O})}^{p'} \, \mathrm{d}t
\right]^{1/p'}
\leqslant c \tau \norm{z'}_{L^{p'}(\Omega \times (0,T); L^{q'}(\mathcal{O}))}.
\]
\end{remark}

\appendix
\section{Some technical estimates}
\label{sec:some_proofs}
In this section, we employ the notation established in \cref{sec:stability}.
Let $\mu$ denote the Lebesgue measure on the domain $\mathcal{O}$.
We also recall that $c$ represents a generic positive constant,
which is independent of the time step $\tau$, though its value may vary from one instance to another.

\begin{lemma}
  \label{lem:calI}
  Suppose that \( p, q \in [2, \infty) \). Consider the curve \( \Upsilon \) constructed in Subsection
  \ref{ssec:space-regu}. Let \( \mathcal{I}(z) \) be defined for \( z \in \Upsilon \) as in \cref{eq:I-def}.
  We assert that \( \mathcal{I}(z) \) is uniformly bounded in the norm of the space
  \[
  \mathcal{L}\left( \ell_{\mathbb{F}}^p(L^p(\Omega; L^q(\mathcal{O}; H))), \, \ell^p(L^p(\Omega; L^q(\mathcal{O}))) \right)
  \]
  with respect to both \( z \in \Upsilon \) and the time step \( \tau \).
\end{lemma}
\begin{proof}
  Fix any $ z \in \Upsilon $ and let $ \xi = e^{-z} - 1 $.
  Using \cref{lem:integral}, Minkowski's inequality and H\"older's inequality,
  we obtain, for any $ g \in \ell_\mathbb F^p(L^p(\Omega;L^q(\mathcal O;H))) $,
  \begin{align*}
    \norm{\mathcal I(z)g}_{\ell^p(L^p(\Omega;L^q(\mathcal O)))}^p 
    ={} &
    \sum_{j=1}^\infty \nmB{
      \sum_{k=0}^{j-1} \xi^{1/2} (1 + \xi)^{k+1-j}
      g_k \delta W_k/\sqrt\tau
    }_{L^p(\Omega;L^q(\mathcal O))}^p \\
    \leqslant{} &
    c\sum_{j=1}^\infty \mathbb E \biggl[
      \int_\mathcal O \Big(
        \sum_{k=0}^{j-1} \norm{\xi^{1/2}(1+\xi)^{k+1-j} g_k}_H^2
      \Big)^{q/2} \, \mathrm{d}\mu
    \biggr]^{p/q} \\
    \leqslant{} &
    c \sum_{j=1}^\infty \mathbb E \biggl[
      \sum_{k=0}^{j-1} \Big(
        \int_\mathcal O
        \norm{\xi^{1/2}(1+\xi)^{k+1-j}g_k}_H^q \, \mathrm{d}\mu
      \Big)^{2/q}
    \biggr]^{p/2} \\
    \leqslant{} &
    c \sum_{j=1}^\infty \mathbb E \Bigg\{\left[
        \sum_{k=0}^{j-1}
        \snmb{\xi^{1/2} (1+\xi)^{k+1-j}}^2
        \norm{g_k}_{L^q(\mathcal O;H)}^p
      \right]
      \big( I(j,\xi) \big)^{p/2-1}
    \Bigg\},
  \end{align*}
  where
  \begin{align*}
    I(j,\xi) := \sum_{k=0}^{j-1} \snmb{
      \xi^{1/2} (1+\xi)^{k+1-j}
    }^2 = \sum_{k=0}^{j-1} \snmb{
      \xi^{1/2}(1+\xi)^{-k}
    }^2.
  \end{align*}
  It follows that, for any $ g \in \ell_\mathbb F^p(L^p(\Omega;L^q(\mathcal O;H))) $,
  \begin{align*}
    & \norm{\mathcal I(z)g}_{\ell^p(L^p(\Omega;L^q(\mathcal O)))}^p \\
    \leqslant{} &
    c I(\infty,\xi)^{p/2-1} \sum_{j=1}^\infty \sum_{k=0}^{j-1}
    \snmb{ \xi^{1/2} (1+\xi)^{k+1-j} }^2
    \norm{g_k}_{L^p(\Omega;L^q(\mathcal O;H))}^p \\
    ={} &
    c I(\infty,\xi)^{p/2-1} \sum_{k=0}^\infty \sum_{j=k+1}^\infty
    \snmb{ \xi^{1/2} (1+\xi)^{k+1-j} }^2
    \norm{g_k}_{L^p(\Omega;L^q(\mathcal O;H))}^p \\
    \leqslant{} &
    c I(\infty,\xi)^{p/2} \norm{g}_{\ell^p(L^p(\Omega;L^q(\mathcal O;H)))}^p \\
    ={} &
    c I(\infty,e^{-z}-1)^{p/2} \norm{g}_{\ell^p(L^p(\Omega;L^q(\mathcal O;H)))}^p,
  \end{align*}
  by the fact $ \xi = e^{-z} - 1 $. This leads to the bound
  \begin{align*}
    \norm{\mathcal I(z)}_{
      \mathcal L\big(
        \ell_\mathbb F^p(L^p(\Omega;L^q(\mathcal O;H))), \,
        \ell^p(L^p(\Omega;L^q(\mathcal O)))
      \big)
    } \leqslant c I(\infty,e^{-z}-1)^{1/2}.
  \end{align*}
  Therefore, the desired claim follows from the fact $ I(\infty,0) = 0 $ and the estimate
  \begin{align*} 
    \sup_{z \in \Upsilon\setminus\{0\}} \, I(\infty,e^{-z}-1)
    &= \sup_{z \in \Upsilon\setminus\{0\}} \,
    \sum_{k=0}^\infty \snmb{
      (e^{-z}-1)^{1/2} e^{kz}
    }^2 = \sup_{z \in \Upsilon\setminus\{0\}} \, \frac{\snm{e^{-z}-1}}{1 - \snm{e^{2z}}} < \infty,
  \end{align*}
  which is easily verified by the construction of $ \Upsilon $ as detailed in Subsection \ref{ssec:space-regu}.
\end{proof}

\begin{lemma}
  \label{lem:Fefferman-Stein}
  Let $ r \in (1,\infty) $ and $ s \in (1,\infty] $. For any $ g \in \ell^r(L^s(\mathcal O)) $, we have
  \[
    \sum_{j=1}^\infty  \nmB{
      \sup_{m \in \mathbb N} \frac1{1+m}
      \sum_{k=j}^{j+m} \snm{g_k}
    }_{L^s(\mathcal O)}^r \leqslant
    c \norm{g}_{\ell^r(L^s(\mathcal O))}^r.
  \]
\end{lemma}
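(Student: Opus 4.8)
The plan is to recognize \cref{lem:Fefferman-Stein} as a discrete Fefferman--Stein maximal inequality for $L^s(\mathcal O)$-valued sequences. Write $(\mathrm M a)_j := \sup_{m \in \mathbb N} \frac1{1+m}\sum_{k=j}^{j+m} \snm{a_k}$ for the forward discrete Hardy--Littlewood maximal operator acting on a scalar sequence $a = (a_k)$. The quantity to be bounded is the $\ell^r$-norm in $j$ of $\nm{(\mathrm M g)_j}_{L^s(\mathcal O)}$, where for each fixed $x \in \mathcal O$ the operator $\mathrm M$ acts on the sequence $(g_k(x))_k$; thus $x$ enters only as a passive parameter and $\mathrm M$ is a pointwise-in-$x$ operator. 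First I would dispose of the two easy configurations. For $s=\infty$ the bound $\snmb{\frac1{1+m}\sum_{k=j}^{j+m}g_k(x)} \leqslant \frac1{1+m}\sum_{k=j}^{j+m}\nm{g_k}_{L^\infty(\mathcal O)}$ yields the pointwise domination $\nm{(\mathrm M g)_j}_{L^\infty(\mathcal O)} \leqslant (\mathrm M b)_j$ with $b_k := \nm{g_k}_{L^\infty(\mathcal O)}$, reducing the claim to the scalar estimate $\nm{\mathrm M b}_{\ell^r} \leqslant c\nm{b}_{\ell^r}$. For $s = r$ a direct application of Fubini's theorem reduces the claim to the same scalar estimate in $\ell^s$, since the supremum and the $L^s(\mathcal O)$-integral may be freely interchanged.

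The scalar discrete maximal inequality $\nm{\mathrm M}_{\ell^r \to \ell^r} < \infty$ for $r \in (1,\infty)$ is classical: $\mathrm M$ is trivially bounded on $\ell^\infty$, is of weak type $(1,1)$ by a one-dimensional Vitali/rising-sun covering argument, and the assertion follows by Marcinkiewicz interpolation. The remaining, genuinely substantial, case is $1 < s < \infty$ with $s \neq r$: here the supremum over $m$ sits \emph{inside} the $L^s(\mathcal O)$-norm, the triangle inequality goes the wrong way, and the reduction to the scalar operator used above is no longer available. I would handle this by the standard Fefferman--Stein scheme, most cleanly by transfer to the continuum. Setting $G(t,x) := g_{\lfloor t\rfloor}(x)$ for $t \in \mathbb R_{+}$, one checks that $(\mathrm M g)_j(x)$ is dominated, up to a fixed constant, by the forward continuous Hardy--Littlewood maximal function of $G(\cdot,x)$ on $[j,j+1)$, while $\nm{g}_{\ell^r(L^s(\mathcal O))} \approx \nm{G}_{L^r(\mathbb R_{+};L^s(\mathcal O))}$. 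The result then follows from the Fefferman--Stein vector-valued maximal inequality for the Banach function space $L^s(\mathcal O)$; see, e.g., \cite{HytonenWeis2017}. Alternatively, one may argue directly: establish the $L^s(\mathcal O)$-valued weak-type $(1,1)$ bound $\#\{j : \nm{(\mathrm M g)_j}_{L^s(\mathcal O)} > \lambda\} \leqslant (c/\lambda)\nm{g}_{\ell^1(L^s(\mathcal O))}$ by a Calderón--Zygmund covering argument carried out with the $L^s(\mathcal O)$-norm in place of absolute values, interpolate it against the diagonal strong bound at $r=s$ to cover $1 < r \leqslant s$, and extend to $s < r < \infty$ by duality.

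The main obstacle is precisely this $L^s(\mathcal O)$-valued weak-type $(1,1)$ estimate, that is, the fact that the supremum is taken inside the $L^s(\mathcal O)$-norm; all the rest (the scalar maximal inequality, Fubini, Marcinkiewicz interpolation, and the duality step) is routine bookkeeping. For the write-up I would take the transfer route of the second paragraph, which replaces the discrete maximal function by its continuous majorant and lets one invoke the classical Fefferman--Stein inequality for Banach function spaces, thereby sidestepping the covering argument entirely.
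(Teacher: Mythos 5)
Your proposal is correct and follows essentially the same route as the paper: both transfer the discrete one-sided maximal function to a piecewise-constant function on $\mathbb R_{+}$, dominate it by the continuous one-sided maximal function, and invoke the continuous $L^s(\mathcal O)$-valued (Fefferman--Stein type) maximal inequality, which the paper takes from \cite[Proposition~3.4]{Neerven2012b}. The only differences are cosmetic: the paper controls $G(t_j)$ by averaging $\nm{G(t)}_{L^s(\mathcal O)}^r$ over $[t_{j-1},t_{j+1}]$ rather than by your pointwise domination on $[j,j+1)$, and your separate treatment of $s=\infty$, $s=r$ and the sketched covering/duality alternative are unnecessary but harmless.
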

\begin{proof}
  Define
  \[ 
    G(t) := \sup_{m \in \mathbb N}
    \frac1{(1+m)\tau} \int_t^{t+(1+m) \tau}
    \widetilde g(\beta) \, \mathrm{d}\beta.
  \]
  where $ \widetilde g(t) := \snm{g_j} $ for all $ t \in [{j\tau},{j\tau+\tau}) $
  and $ j \in \mathbb N $. For any $ j \geqslant 1 $, it is easily verified that
  \begin{align*}
    G({j\tau})(x) \leqslant \frac{c}\tau
    \int_{j\tau-\tau}^{{j\tau+\tau}} G(t)(x)
    \, \mathrm{d}t, \quad \forall x \in \mathcal O,
  \end{align*}
  which implies, through Minkowski's inequality and H\"older's inequality, that
  \begin{align*} 
    \norm{G({j\tau})}_{L^s(\mathcal O)}^r
    & \leqslant
    c \tau^{-r} \biggl[
      \int_{\mathcal O}
      \Big(
        \int_{j\tau-\tau}^{{j\tau+\tau}} G(t) \, \mathrm{d}t
      \Big)^s \, \mathrm{d}\mu
      \biggr]^{r/s} \\
    & \leqslant
    c \tau^{-r} \biggl[
      \int_{j\tau-\tau}^{{j\tau+\tau}} \norm{G(t)}_{L^s(\mathcal O)}
      \, \mathrm{d}t
    \biggr]^r \\
    & \leqslant
    c \tau^{-1} \int_{j\tau-\tau}^{{j\tau+\tau}}
    \norm{G(t)}_{L^s(\mathcal O)}^r
    \, \mathrm{d}t.
  \end{align*}
  Summing over $j$, we find
  \begin{align*} 
    \sum_{j = 1}^\infty \norm{G({j\tau})}_{L^s(\mathcal O)}^r
    &\leqslant c \tau^{-1} \int_{\mathbb R_{+}}
    \norm{G(t)}_{L^s(\mathcal O)}^r \, \mathrm{d}t \\
    &\leqslant c \tau^{-1} \norm{\widetilde g}_{L^r(\mathbb R_{+};L^s(\mathcal O))}^r
    \quad\text{(by \cite[Proposition~3.4]{Neerven2012})} \\
    & = c \norm{g}_{\ell^r(L^s(\mathcal O))}^r.
  \end{align*}
  The desired inequality then follows from  the fact
  \[
    G({j\tau}) = \sup_{m \in \mathbb N}
    \frac{1}{1+m} \sum_{k=j}^{j+m}
    \snm{g_k}, \quad \forall j \geqslant 1.
  \]
  This completes the proof.
\end{proof}

\begin{lemma}
  \label{lem:Pi_m}
  Assume that $ p,q \in [2,\infty) $. Define
  \[ 
    \{ \Pi_m \mid \, m \in \mathbb N \} \subset
    \mathcal L\big(
      \ell_{\mathbb F}^p(L^p(\Omega;L^q(\mathcal O;H))), \,
      \ell^p(L^p(\Omega;L^q(\mathcal O)))
    \big)
  \]
  as follows: for each $ m \in \mathbb N $ and
  $ g \in \ell_{\mathbb F}^p(L^p(\Omega;L^q(\mathcal O;H))) $,
  \begin{subequations}
    \label{eq:calII-def}
    \begin{numcases}{}
      (\Pi_mg)_0 := 0, \\
      (\Pi_mg)_j := \frac1{\sqrt{m+1}}
      \sum_{k=(j-1-m)\vee 0}^{j-1} g_k \frac{\delta W_k}{\sqrt\tau},
      \quad j \geqslant 1.
    \end{numcases}
  \end{subequations}
  Then $ \mathcal R(\{\Pi_m \mid m \in \mathbb N\}) $ is uniformly bounded
  with respect to the time step $ \tau $.
\end{lemma}

\begin{proof}
  Following the proof of \cite[Theorem~3.1]{Neerven2012}, we only present
  a brief derivation.
  Consider an arbitrary positive integer $ N $. Let $ (r_n)_{n=1}^N $ denote a sequence of
  independent, symmetric $\{-1,1\}$-valued random variables defined on the probability
  space $ \Omega_r $. We denote the expectation with respect
  to this probability space by $ \mathbb E_r $. Given a sequence $ (g^n)_{n=1}^N $ in
  $ \ell_\mathbb F^p(L^p(\Omega;L^q(\mathcal O;H))) $ and a sequence $ (m_n)_{n=1}^N $ in
  $ \mathbb N $, we proceed as follows:
  \begin{align*}
    & \biggl[
      \mathbb{E}_r \Bigl\| \sum_{n=1}^N r_n \Pi_{m_n} g^n \Bigr\|_{\ell^p(L^p(\Omega; L^q(\mathcal{O})))}^2
    \biggr]^{1/2} \\
    \leqslant{}
    & c \biggl[ \mathbb{E}_r \Bigl\| \sum_{n=1}^N r_n \Pi_{m_n} g^n \Bigr\|_{\ell^p(L^p(\Omega; L^q(\mathcal{O})))}^p \biggr]^{1/p} \\
    = {} & c \biggl[
      \sum_{j=1}^{\infty} \mathbb{E} \mathbb{E}_r \Bigl\|
      \sum_{n=1}^N \frac{r_n}{\sqrt{1+m_n}} \sum_{k=j-1-m_n\vee 0}^{j-1} g_k^n \delta W_k / \sqrt{\tau} \Bigr\|_{L^q(\mathcal{O})}^p 
    \biggr]^{1/p} \\
    \leqslant{}
         & c \biggl[
           \sum_{j=1}^{\infty} \mathbb{E}_r \mathbb{E} \Big[ \int_{\mathcal{O}} \Big( \sum_{k=0}^{j-1} \Bigl\| \sum_{n=1}^N \frac{r_n}{\sqrt{1+m_n}} \mathbbm{1}_{j-1-m_n \leqslant k} g_k^n \Bigr\|_H^2 \Big)^{q/2} \, \mathrm{d}\mu \Big]^{p/q} 
         \biggr]^{1/p},
    \end{align*}
    where the first inequality utilizes the Kahane-Khintchine inequality (see, e.g., \cite[Theorem~6.2.4]{HytonenWeis2017}),
    and the final inequality employs Lemma \ref{lem:integral}.
    Here, \( \mathbbm{1}_{j-1-m_n \leqslant k} \) denotes the indicator 
    function, which equals \(1\) if \( j-1-m_n \leqslant k \), and \(0\) otherwise.
    Applying the Kahane-Khintchine inequality again, we obtain, for any \( j \in \mathbb{N}_{>0} \),
    \begin{align*}
        & \mathbb{E}_r \biggl[
          \int_{\mathcal{O}} \Big(
            \sum_{k=0}^{j-1} \Bigl\| \sum_{n=1}^N \frac{r_n}{\sqrt{1+m_n}} \mathbbm{1}_{j-1-m_n \leqslant k} g_k^n \Bigr\|_H^2
          \Big)^{q/2} \, \mathrm{d}\mu
        \biggr]^{p/q} \\
      \leqslant{} & c \Bigl\| \sum_{n=1}^N \frac{1}{1+m_n} \sum_{k=j-1-m_n\vee 0}^{j-1} \|g_k^n\|_H^2 \Bigr\|_{L^{q/2}(\mathcal{O})}^{p/2}.
    \end{align*}
    Combining the above estimates, we arrive at
    \begin{equation}
      \label{eq:426}
      \begin{aligned}
        & \biggl[
          \mathbb E_r \nmB{\sum_{n=1}^N r_n \Pi_{m_n}g^n}_{
            \ell^p(L^p(\Omega;L^q(\mathcal O)))
          }^2
        \biggr]^{1/2} \\
        \leqslant{} &
        c \biggl[
          \sum_{j=1}^\infty \nmB{
            \sum_{n=1}^N \frac1{1+m_n} \sum_{k=j-1-m_n\vee 0}^{j-1}
            \norm{g_k^n}_H^2
          }_{L^{p/2}(\Omega;L^{q/2}(\mathcal O))}^{p/2}
        \biggr]^{1/p}.
      \end{aligned}
    \end{equation}
    For any $ Z \in \ell^{(p/2)'}(L^{(p/2)'}(\Omega;L^{(q/2)'}(\mathcal O))) $,
    a direct calculation gives
    \begin{align*}
      \sum_{j=1}^\infty \dualB{
        \sum_{n=1}^N \frac1{1+m_n} \sum_{k=j-1-m_n\vee 0}^{j-1}
        \norm{g_k^n}_H^2, \, Z_j
      } &=
      \sum_{k=0}^\infty \sum_{n=1}^N \sum_{j=k+1}^{k+1+m_n}
      \frac1{1+m_n} \dualB{
        \norm{g_k^n}_H^2, \, Z_j
      } \\
      &=
      \sum_{k=0}^\infty \sum_{n=1}^N \dualB{
        \norm{g_k^n}_H^2, \, \frac1{1+m_n}
        \sum_{j=k+1}^{k+1+m_n} Z_j
      } \\
      &\leqslant
      \sum_{k=0}^\infty \dualB{
        \sum_{n=1}^N \norm{g_k^n}_H^2,
        \, \sup_{m\in\mathbb N}\frac1{1+m}
        \sum_{j=k+1}^{k+1+m} \snm{Z_j}
      },
    \end{align*}
    where $ \dual{\cdot,\cdot} $ denotes the duality pairing between $ L^{p/2}(\Omega;L^{q/2}(\mathcal O)) $
    and $ L^{(p/2)'}(\Omega;L^{(q/2)'}(\mathcal O)) $.
    By H\"older's inequality and \cref{lem:Fefferman-Stein}, it follows that
    \begin{align*}
      & \sum_{j=1}^\infty \dualB{
        \sum_{n=1}^N \frac1{1+m_n}
        \sum_{k=j-1-m_n\vee 0}^{j-1}
        \norm{g_k^n}_H^2, \, Z_j
      } \\
      \leqslant{} &
      c\biggl[
        \sum_{k=0}^\infty \nmB{
          \sum_{n=1}^N \norm{g_k^n}_H^2
        }_{L^{p/2}(\Omega;L^{q/2}(\mathcal O))}^{p/2}
      \biggr]^{2/p} \times \norm{Z}_{\ell^{(p/2)'}(L^{(p/2)'}(\Omega;L^{(q/2)'}(\mathcal O)))}.
    \end{align*}
    Invoking the duality principle then yields
    \begin{align*}
    & \biggl[
      \sum_{j=1}^\infty \nmB{
        \sum_{n=1}^N \frac1{1+m_n} \sum_{k=j-1-m_n\vee 0}^{j-1}
        \norm{g_k^n}_H^2
      }_{L^{p/2}(\Omega;L^{q/2}(\mathcal O))}^{p/2}
    \biggr]^{2/p} \\
      \leqslant{} &
      c\biggl[
        \sum_{k=0}^\infty \nmB{
          \sum_{n=1}^N \norm{g_k^n}_H^2
        }_{L^{p/2}(\Omega;L^{q/2}(\mathcal O))}^{p/2}
      \biggr]^{2/p}.
    \end{align*}
    In conjunction with \cref{eq:426}, this leads to
    \begin{align*}
      \biggl[
        \mathbb E_r \nmB{\sum_{n=1}^N r_n \Pi_{m_n}g^n}_{
          \ell^p(L^p(\Omega;L^q(\mathcal O)))
        }^2
      \biggr]^{1/2} \leqslant
      c\biggl[
        \sum_{k=0}^\infty \nmB{
          \sum_{n=1}^N \norm{g_k^n}_H^2
        }_{L^{p/2}(\Omega;L^{q/2}(\mathcal O))}^{p/2}
      \biggr]^{1/p}.
    \end{align*}
    On the other hand, using the Kahane-Khintchine inequality gives
    \[
      \biggl[ \mathbb{E}_r \Bigl\| \sum_{n=1}^N r_n g^n \Bigr\|_{\ell^p(L^p(\Omega; L^q(\mathcal{O}; H)))}^2 \biggr]^{1/2}
      \geqslant c \biggl[
        \sum_{j=0}^{\infty} \Bigl\| \sum_{n=1}^N \|g_j^n\|_H^2 \Bigr\|_{L^{p/2}(\Omega; L^{q/2}(\mathcal{O}))}^{p/2}
      \biggr]^{1/p}.
    \]
    Consequently,
    \begin{align*}
      \biggl[
        \mathbb E_r \nmB{\sum_{n=1}^N r_n \Pi_{m_n}g^n}_{
          \ell^p(L^p(\Omega;L^q(\mathcal O)))
        }^2
      \biggr]^{1/2} \leqslant
      c \biggl[
        \mathbb E_r \nmB{\sum_{n=1}^N r_n g^n}_{
          \ell^p(L^p(\Omega;L^q(\mathcal O;H)))
        }^2
      \biggr]^{1/2}.
    \end{align*}
    Since the above generic positive constant \( c \) is independent of \( \tau \),
    and given that \( N \) is an arbitrary positive integer
    while \( (g^n)_{n=1}^N \) is an arbitrary sequence in
    \( \ell_{\mathbb F}^p(L^p(\Omega; L^q(\mathcal O; H))) \),
    $ (m_n)_{n=1}^N $ is an arbitrary sequence in $ \mathbb N $,
    and $ (r_n)_{n=1}^N $ is an arbitrary sequence of independent symmetric 
    $ \{-1,1\}$-valued random variables,
    it follows that \( \mathcal R(\{\Pi_m \mid m \in \mathbb N\}) \)
    is uniformly bounded with respect to \( \tau \).
    This completes the proof.
\end{proof}

\end{document}